\renewcommand{\bar}{\overline}
\renewcommand{\hat}{\widehat}
\renewcommand{\tilde}{\widetilde}
\newcommand{\Tr}{\mathrm{Tr}}
\newcommand{\R}{\mathbb{R}}
\newcommand{\N}{N}
\newcommand{\tends}{\rightarrow}
\newcommand{\indenti}[1]{\transparent{0}{#1}\transparent{1}}
\newtheorem{proposition}{Proposition}
\newtheorem{assumption}{Assumption}
\newtheorem{lemma}{Lemma}
\newtheorem{remark}{Remark}
\newtheorem{theorem}{Theorem}
\title{\LARGE \bf
Discrete-Time Adaptive Control of a Class of Nonlinear Systems Using High-Order Tuners
}
\author{Peter A. Fisher and Anuradha M. Annaswamy
\thanks{This work was supported by the Boeing Strategic University Initiative}
\thanks{The authors are with the Department of Mechanical Engineering, Massachusetts Institute of Technology, Cambridge, MA, 02139 USA}%
\thanks{Corresponding author: P.A. Fisher, \ {\tt\small pafisher@mit.edu}}%
}
\begin{document}

\maketitle
\thispagestyle{empty}
\pagestyle{empty}

\begin{abstract}

This paper concerns the adaptive control of a class of discrete-time nonlinear systems with all states accessible. Recently, a high-order tuner algorithm was developed for the minimization of convex loss functions with time-varying regressors in the context of an identification problem. Based on Nesterov's algorithm, the high-order tuner was shown to guarantee bounded parameter estimation when regressors vary with time, and to lead to accelerated convergence of the tracking error when regressors are constant. In this paper, we apply the high-order tuner to the adaptive control of a particular class of discrete-time nonlinear dynamical systems. First, we show that for plants of this class, the underlying dynamical error model can be causally converted to an algebraic error model. Second, we show that using this algebraic error model, the high-order tuner can be applied to provably stabilize the class of dynamical systems around a reference trajectory.

\end{abstract}

\section{INTRODUCTION} \label{sec:introduction}

Adaptive control problems take the form of controlling a plant containing unknown parameters, which requires simultaneous online learning and control \cite{Narendra2005,Annaswamy2021hist,Landau11,Goodwin_1984}. The field is rich with numerous applications and a theoretical history stretching back decades \cite{Annaswamy2021hist}. As autonomous systems become more and more pervasive, there is a growing need for faster learning and faster control. The many approaches to adaptive control that have been developed over the years can be roughly divided into two categories \cite{Goodwin_1984}: indirect adaptive control, in which the unknown plant parameters are learned and state feedback is calculated from the estimates; and direct adaptive control, in which the state feedback is directly learned.

Many recent approaches have taken an indirect approach. The approaches in \cite{Dean_2018,Dean_2018a} are one illustration of indirect adaptive control of LTI systems, where the unknown parameters are first estimated using a least squares approach followed by a system-level synthesis method to determine the resulting LQR gain $K$. It should be noted that indirect adaptive control has a very rich history prior to \cite{Dean_2018,Dean_2018a} as well \cite{Narendra2005,Annaswamy2021hist,Landau11,Goodwin_1984}.

Indirect approaches, however, have the requirement that the initial parameter estimate is sufficiently close, and they require a persistently exciting input so that the parameter estimates converge to their true values -- \cite{Dean_2018}, for example, calls for Gaussian noise as input.
Direct adaptive control algorithms, on the other hand, determine a control structure wherein the parameters are directly adjusted based on a suitable performance error derived using a reference model. Often the adaptive laws for adjusting these parameters are based on an error model \cite{Narendra2005} that leads to a stable adaptive law. The main advantage of this approach over the indirect one is that there is no requirement related to persistent excitation. As one cannot always guarantee that such an excitation is present, and as it is often counter to the system performance goals, this direct approach can be advantageous in many cases. For the most part, the adaptive laws for updating the parameter estimates are based on a gradient descent approach, both in continuous time \cite{Narendra2005,Annaswamy2021hist} and discrete-time \cite{Goodwin_1984}.

We restrict our attention in this paper to the matched uncertainty setting. There has been previous literature on the unmatched uncertainty setting, especially the hybrid MRAC approach in \cite{quindlen2015}, which relies other methods such as concurrent learning \cite{chowdhary2013} and composite learning \cite{pan2016} to avoid the need for persistent excitation. However, all of the above methods rely on perfect parameter learning, which requires at least a guarantee of finite excitation. Another hybrid MRAC approach is presented in \cite{joshi2019} which does not place any assumption on excitation level, but does assume a bounded state and time derivative. Additionally, all of the papers above focus only on continuous time.

Within direct adaptive control, high-order tuners represent a more recent departure from gradient descent-based methods. High-order tuners for adaptive control were first studied in \cite{Morse_1992}. Within the past few years, a discrete-time high-order tuner was developed in \cite{gaudio2020accelerated} for parameter learning with time-varying regressors. Developed from a well-known theory of 2nd-order gradient algorithms for accelerated convergence \cite{Polyak_1964,Nesterov_1983,Nesterov_2018,Wibisono_2016}, the high-order tuner algorithm in \cite{gaudio2020accelerated} was shown to lead to faster learning than gradient descent-based methods, as well as strong non-asymptotic convergence guarantees for constant regressors. Crucially, the discrete-time high-order tuner is provably stable when regressors vary with time. Its distinct advantage is accelerated convergence of the output error: it has been shown in \cite{gaudio2020accelerated} that when regressors are constant, the high-order tuner has convergence guarantees that are a log factor away from those of Nesterov's algorithm \cite{Nesterov_2018} and that are significantly faster than those of gradient descent algorithms. Additionally, in \cite{Gaudio2019a}, a continuous-time version of the high-order tuner algorithm was shown in simulation to result in an accelerated convergence of the output error to zero.

The discrete-time high-order tuner discussed above has only been studied in the context of system identification, and has employed algebraic error models for parameter learning. In this paper, we consider the adaptive control problem for a class of feedback-linearizable dynamical systems whose states are accessible. For this class, we show that the high-order tuner can be applied, leading to global stability and convergence of the underlying tracking error in the state to zero. As in \cite{scitech_paper}, a causal filtering approach based on \cite{Kudva_Narendra1974} converts the underlying dynamical error model into an algebraic error model. Unlike in \cite{Lion1967,Kreisselmeier1977,Slotine1989,Duarte1989,Ortega2016_DREM} where the underlying states are filtered as well, the approach used here only generates an augmented error signal, as in \cite{Kudva_Narendra1974}. Using this error model and a high-order tuner, we show that the class of dynamic systems can be adaptively controlled in a stable manner (see \cite{scitech_paper} for a few preliminary results).


The main contribution of this paper is the application of the high-order tuner in \cite{gaudio2020accelerated} to direct adaptive control of a class of feedback-linearizable systems. We prove that this algorithm guarantees global boundedness of the closed-loop adaptive system and asymptotic tracking of a reference model regardless of the level of excitation in the input or the initial parameter estimate.
Our proof technique is straightforward and generalizable to a broad class of laws for updating the parameter estimate.

To the authors' knowledge, ours is the first paper to apply the high-order tuner in \cite{gaudio2020accelerated} to general discrete-time adaptive control. Our paper complements \cite{scitech_paper}, which explores a simplified high-order tuner under noisy disturbances, and \cite{Cui_preprint}, which establishes parameter learning for identification problems in discrete-time dynamical systems with persistent excitation.

The paper proceeds as follows. Section \ref{sec:problem_setting} lays out the problem setting, describes the framework by which we convert the dynamical error model to an algebraic error model, and introduces useful notation for the subsequent proofs. Section \ref{sec:gd_adaptive_control} provides an illustrative example of our proof technique on a gradient descent-based adaptive law. Section \ref{sec:ht_adaptive_control} presents the main result of our paper: a proof of stability using the high-order tuner in \cite{gaudio2020accelerated} as an adaptive law. Section \ref{sec:simulations} presents simulation results of the high-order tuner's performance on a simple common dynamical system. Finally, Section \ref{sec:conclusion} provides concluding remarks, and the Appendix contains all proofs in the paper as well as a discussion of allowable high-order tuner hyperparameters.
\section{PROBLEM SETTING} \label{sec:problem_setting}






The problem that we consider in this paper is the adaptive control of the states-accessible plant
\begin{equation} \label{eqn:plant}
    x_{p(k+1)} = A_px_{pk} + B\left(\sum_{i = 1}^p a_if_i(x_{pk}) + u_k\right)
\end{equation}
where $A_p \in \R^{n \times n}$ and all $a_i \in \R^m$ are unknown, while $B \in \R^{n \times m}$ and all $f_i : \R^n \to \R$ are known, subject to the following assumptions:
\begin{assumption} \label{asn:plant_assumptions}
    We assume that
    \begin{enumerate}
        \item the pair $(A_p, B)$ is controllable and all columns of $B$ are linearly independent, and
        \item each $f_i(\cdot)$ is known and globally $M_i$-Lipschitz with $f_i(0) = 0$.
    \end{enumerate}
\end{assumption}
The goal is to determine the control input $u_k$ in real time such that $x_{pk}$ behaves in a desired manner.

A standard procedure in adaptive control is to choose a reference model
\begin{equation} \label{eqn:reference_model}
    x_{m(k+1)} = A_mx_{mk} + Br_k
\end{equation}
where $A_m$ is chosen by the control designer to be Schur-stable with the desired closed-loop eigenvalues, and $r_k \in \R^m$ is a reference input with $\|r_k\| \leq r_{max}$ chosen such that $x_{mk}$ follows the desired trajectory of the plant. For realizability, given that $A_m$ is chosen without a priori knowledge of $A_p$, the following matching condition is a standard assumption employed in adaptive control:
\begin{assumption} \label{asn:matching_condition}
    We assume that there exists some $K_* \in \R^{m \times n}$ such that
    \begin{equation} \label{eqn:matching_condition}
        A_m = A_p + BK_*.
    \end{equation}
\end{assumption}

If all parameters were known, choosing $u_k$ of the form
\begin{equation} \label{eqn:optimal_feedback}
    u_k = K_*x_{pk} + r_k - \sum_{i = 1}^p a_if_i(x_{pk}),
\end{equation}
a feedback linearizing controller, would ensure that the closed-loop plant response follows the same trajectory as the reference model.

It is well-known that an adaptive control input of the form
\begin{equation} \label{eqn:feedback}
    u_k = \hat{K}_kx_{pk} + r_k - \sum_{i = 1}^p \hat{a}_if_i(x_{pk}),
\end{equation}
where $\hat{K}_k$ and $\hat{a}_{ik}$ are estimates of the unknown parameters $K_*$ and $a_i$ in \eqref{eqn:optimal_feedback}, can guarantee that the state error
\begin{equation} \label{eqn:e}
    e_k := x_{pk} - x_{mk}
\end{equation}
converges to zero if the estimates are suitably adjusted using an adaptive law \cite{Goodwin_1984,Narendra2005}.

In the remainder of this section, we propose a new general algorithm for this adaptive control problem, based on results in \cite{Kudva_Narendra1974} for system identification. In Section \ref{sec:ht_adaptive_control}, we then propose the addition of the high-order tuner developed in \cite{gaudio2020accelerated,Cui_preprint} as a particular adaptive law.

The certainty equivalence input in \eqref{eqn:feedback} is equivalent to
\begin{equation} \label{eqn:feedback_rearranged}
    u_k = K_*x_{pk} + r_k - \sum_{i = 1}^p a_if_i(x_{pk}) + \tilde{\Theta}_k\phi_k
\end{equation}
where
\begin{gather}
    \hat{\Theta}_k := [\hat{K}_k, \hat{a}_{1k}, \dots, \hat{a}_{pk}] \label{eqn:Theta} \\
    \Theta_* := [K_*, a_1, \dots, a_p] \label{eqn:Theta_star} \\
    \tilde{\Theta}_k := \hat{\Theta}_k - \Theta_* \label{eqn:Theta_tilde} \\
    \phi_k^\top := [x_{pk}^\top, -f_1(x_{pk}), \dots, -f_p(x_{pk})]. \label{eqn:phi}
\end{gather}
The closed-loop adaptive system for the plant in \eqref{eqn:plant} with the controller in \eqref{eqn:feedback_rearranged} is thus described by
\begin{equation} \label{eqn:plant_rearranged}
    x_{p(k+1)} = A_mx_{pk} + Br_k + B\tilde{\Theta}_k\phi_k.
\end{equation}
It is easy to see that \eqref{eqn:plant_rearranged}, \eqref{eqn:reference_model}, and \eqref{eqn:e} yield the error model
\begin{equation} \label{eqn:error_model_2}
    e_{k+1} = A_me_k + B\tilde{\Theta}_k\phi_k.
\end{equation}

\subsection{An Equivalent Algebraic Error Model}
\label{subsec:error_model_1}

Equation \eqref{eqn:error_model_2} is a dynamical error model, as it relates the two main errors, the state error $e_k$ and the parameter error $\tilde{\Theta}_k$, through a dynamical model. Our approach based on \cite{Kudva_Narendra1974} transforms this problem into an algebraic error model of the form
\begin{equation} \label{eqn:error_model_1}
    \varepsilon_{k+1} = \tilde{\Theta}_k\phi_k
\end{equation}
where \cite{Kudva_Narendra1974}
\begin{gather}
    \varepsilon_{k+1} := (B^\top B)^{-1}B^\top(e_{k+1} - A_me_k), \label{eqn:prediction_error}.
\end{gather}
It should be noted that the prediction error $\varepsilon_{k+1}$ is another performance metric that depends on the state error through the relation
\begin{equation} \label{eqn:error_model_2_1}
    e_{k+1} = A_me_k + B\varepsilon_{k+1}.
\end{equation}

It should also be noted that a causal adaptive law can be derived for adjusting the parameter estimate $\hat{\Theta}_k$ defined in \eqref{eqn:Theta} by first measuring the state and the state errors on the right-hand side of \eqref{eqn:prediction_error} and then updating the parameter estimate. This overall algorithm is summarized in Algorithm \ref{alg:adaptive_control} \cite{scitech_paper}. In line \ref{algln:adaptive_law}, ADAPT refers to any iterative algorithm for updating $\hat{\Theta}_k$.

\begin{algorithm}[b]
  \caption{General Algorithm for Direct Adaptive Control of the Plant in \eqref{eqn:plant} \cite{scitech_paper}}
  \label{alg:adaptive_control}
  \begin{algorithmic}[1]
    \STATE {\bfseries Input:} initial conditions $x_{p0}$, $x_{m0}$, initial parameter estimate $\hat{\Theta}_0$, reference input $\{r_k\}_{k \geq 0}$, reference system $(A_m, B)$, functions $f_1(\cdot), \dots, f_p(\cdot)$, hyperparameters
    \FOR{$k = 0, 1, 2, \ldots$}
    \STATE \textbf{Receive} reference input $r_k$
    \STATE Let $\phi_k = [x_{pk}^\top, -f_1(x_{pk}), \dots, -f_p(x_{pk})]^\top$
    \STATE Let $u_k = \hat{\Theta}_k\phi_k + r_k$
    \STATE \textbf{Apply} $u_k$ to plant
    \STATE \textbf{Measure} new state $x_{p(k+1)}$
    \STATE \textbf{Simulate} $x_{m(k+1)} \gets A_mx_{mk} + Br_k$
    \STATE Let $e_{k+1} = x_{p(k+1)} - x_{m(k+1)}$
    \STATE Let $\varepsilon_{k+1} = (B^\top B)^{-1}B^\top(e_{k+1} - A_me_k)$
    \STATE \textbf{Update} $\hat{\Theta}_{k+1} \gets$ ADAPT$(\hat{\Theta}_k, \phi_k, \varepsilon_{k+1}, *)$ \label{algln:adaptive_law}
    \ENDFOR
  \end{algorithmic}
\end{algorithm}

As we shall show in the following sections, update laws based on the gradient of a loss function - in particular, normalized gradient descent and the high-order tuner - lead to global stability, regardless of the level of excitation in the input or the initial parameter estimate $\hat{\Theta}_0$.


\subsection{Preliminaries} \label{subsec:preliminaries}




In this section, we first review a well-known result pertaining to the Lyapunov stability of linear time-invariant systems:
\begin{proposition}[\cite{hespanha2018}] \label{pro:dlyap}
    For any matrix $A \in \R^{n \times n}$, the following conditions are equivalent:
    \begin{enumerate}
        \item $A$ is Schur-stable, i.e. all eigenvalues of $A$ are inside the unit circle.
        \item For every symmetric positive-definite $Q$, there exists a unique symmetric positive-definite $P$ satisfying the discrete-time Lyapunov equation
    \end{enumerate}
    \begin{equation} \label{eqn:dlyap}
        A^\top PA - P = -Q.
    \end{equation}
\end{proposition}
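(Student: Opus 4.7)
The plan is to prove the two implications $(2) \Rightarrow (1)$ and $(1) \Rightarrow (2)$ separately, both via standard linear-algebraic arguments.

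For the forward direction $(2) \Rightarrow (1)$, I would fix any eigenvalue $\lambda$ of $A$ with (possibly complex) eigenvector $v$, i.e.\ $Av = \lambda v$. Pre- and post-multiplying the identity $A^\top P A - P = -Q$ by $v^*$ and $v$ respectively yields
\[
(|\lambda|^2 - 1)\, v^* P v \;=\; -\, v^* Q v .
\]
Since $P$ and $Q$ are symmetric positive definite, both $v^* P v$ and $v^* Q v$ are strictly positive real numbers, which forces $|\lambda|^2 < 1$. As $\lambda$ was arbitrary, every eigenvalue of $A$ lies inside the unit circle.

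For the reverse direction $(1) \Rightarrow (2)$, I would construct $P$ explicitly by the infinite series
\[
P \;:=\; \sum_{k=0}^\infty (A^\top)^k Q A^k .
\]
The key preliminary step is convergence: Schur-stability of $A$ implies the existence of constants $C > 0$ and $\rho \in (0,1)$ with $\|A^k\| \leq C\rho^k$, so the series is absolutely convergent in operator norm. Symmetry of $P$ is inherited term-by-term from $Q$; positive-definiteness follows because the $k=0$ term is the positive-definite $Q$ itself while all subsequent terms are positive semidefinite. The Lyapunov identity is then verified by an index-shift computation in which $A^\top P A$ reindexes to $\sum_{j \geq 1} (A^\top)^j Q A^j$, so subtracting off $P$ leaves precisely $-Q$.

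Uniqueness reduces to showing that the associated homogeneous equation $A^\top \Delta A = \Delta$ admits only $\Delta = 0$: if $P_1$ and $P_2$ both solve the Lyapunov equation with the same $Q$, then $\Delta := P_1 - P_2$ satisfies this homogeneous identity, which iterates to $(A^\top)^k \Delta A^k = \Delta$ for every $k \geq 0$. Letting $k \to \infty$ and invoking Schur-stability makes the left side vanish, forcing $\Delta = 0$. The main potential obstacle is purely notational rather than conceptual: care must be taken in the complex-eigenvector step to use conjugate transposes (since $A$ is real but $v$ and $\lambda$ need not be), and to avoid off-by-one errors in the telescoping index shift. No part of the argument requires machinery beyond basic matrix analysis.
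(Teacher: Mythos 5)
Your proof is correct and complete: the eigenvector quadratic-form argument for $(2)\Rightarrow(1)$, the convergent series construction $P=\sum_{k\ge 0}(A^\top)^kQA^k$ for existence, and the homogeneous-equation iteration for uniqueness are exactly the standard textbook argument, with the conjugate-transpose subtlety handled properly. The paper itself offers no proof of this proposition --- it is stated as a known result with a citation to \cite{hespanha2018} --- so there is nothing to compare against; your writeup would serve as a self-contained proof if one were needed.
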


Finally, we provide a useful result pertaining to the relative growth rates of $x_{pk}$ and $\phi_k$:
\begin{lemma} \label{lem:phi_O_x}
    Let $\phi_k$ be the regressor defined in \eqref{eqn:phi}. Then, under Assumption \ref{asn:plant_assumptions}, there exists a known constant $C > 0$ such that $\|\phi_k\|^2 \leq C\|x_{pk}\|^2$.
\end{lemma}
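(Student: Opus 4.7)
The plan is to unpack the definition of $\phi_k$ and bound each scalar component by $\|x_{pk}\|$ using the Lipschitz-at-origin property supplied by Assumption \ref{asn:plant_assumptions}. Concretely, since $\phi_k$ in \eqref{eqn:phi} is the vertical concatenation of $x_{pk}$ with the scalars $-f_1(x_{pk}), \dots, -f_p(x_{pk})$, I would first observe that
\begin{equation*}
    \|\phi_k\|^2 \;=\; \|x_{pk}\|^2 + \sum_{i=1}^{p} f_i(x_{pk})^2 .
\end{equation*}

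Next I would exploit the fact that each $f_i$ is globally $M_i$-Lipschitz with $f_i(0)=0$. Applying the Lipschitz inequality between the point $x_{pk}$ and the origin gives
\begin{equation*}
    |f_i(x_{pk})| \;=\; |f_i(x_{pk}) - f_i(0)| \;\leq\; M_i \, \|x_{pk}\|,
\end{equation*}
so squaring yields $f_i(x_{pk})^2 \leq M_i^2 \|x_{pk}\|^2$. Substituting back, I obtain the desired bound
\begin{equation*}
    \|\phi_k\|^2 \;\leq\; \Bigl(1 + \sum_{i=1}^{p} M_i^2 \Bigr) \|x_{pk}\|^2,
\end{equation*}
and hence one may take the explicit constant $C := 1 + \sum_{i=1}^{p} M_i^2$, which is known because the Lipschitz constants $M_i$ are given as part of the problem data.

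There is no real obstacle here; the lemma is essentially a bookkeeping consequence of the definition \eqref{eqn:phi} together with part~2 of Assumption \ref{asn:plant_assumptions}. The only subtlety worth flagging is the role of $f_i(0)=0$: without this anchoring condition, Lipschitz continuity alone would only give $|f_i(x_{pk})| \leq |f_i(0)| + M_i\|x_{pk}\|$, producing an affine bound rather than a purely quadratic one, and the conclusion $\|\phi_k\|^2 = O(\|x_{pk}\|^2)$ would fail near $x_{pk}=0$. The assumption $f_i(0)=0$ is thus precisely what converts the Lipschitz hypothesis into the homogeneous growth bound needed downstream.
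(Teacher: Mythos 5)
Your proof is correct and is exactly the ``fairly straightforward'' argument the paper omits: decompose $\|\phi_k\|^2 = \|x_{pk}\|^2 + \sum_{i=1}^p f_i(x_{pk})^2$ and apply the Lipschitz bound anchored at $f_i(0)=0$ to obtain $C = 1 + \sum_{i=1}^p M_i^2$. Your remark about why $f_i(0)=0$ is essential (otherwise only an affine, not homogeneous, bound follows) is a worthwhile observation that the paper does not spell out.
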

We omit the proof, as it is fairly straightforward.
\section{A FIRST-ORDER APPROACH TO ADAPTIVE CONTROL} \label{sec:gd_adaptive_control}


The algebraic error model in \eqref{eqn:error_model_1} lends itself easily to a loss function given by \cite{gaudio2020accelerated}
\begin{align}
    L_k(\hat{\Theta}_k) &= \frac{1}{2}\|\varepsilon_{k+1}\|^2 = \frac{1}{2}\|\tilde{\Theta}_k\phi_k\|^2. \label{eqn:loss_function}
\end{align}
Using \eqref{eqn:error_model_1}, the gradient of the loss function can then be calculated as
\begin{equation} \label{eqn:L_gradient}
    \nabla L_k(\hat{\Theta}_k) = \tilde{\Theta}_k\phi_k\phi_k^\top = \varepsilon_{k+1}\phi_k^\top.
\end{equation}
It is easy to see that $L_k(\hat{\Theta}_k)$ is non-strongly convex and has a time-varying smoothness parameter of $\|\phi_k\|^2$. We therefore use a normalized loss function given by \cite{gaudio2020accelerated}
\begin{equation} \label{eqn:loss_normalized}
    \bar{f}_k(\hat{\Theta}_k) = \frac{L_k(\hat{\Theta}_k)}{\N_k}
\end{equation}
with the normalization term
\begin{equation} \label{eqn:N}
    \N_k = \max\{\mu, \|\phi_k\|^2\}
\end{equation}
for some $\mu > 0$. It is thus apparent that $\bar{f}_k(\hat{\Theta}_k)$ is convex and 1-smooth, with a gradient that can be calculated as
\begin{equation} \label{eqn:f_gradient}
    \nabla\bar{f}_k(\hat{\Theta}_k) = \frac{1}{\N_k}\varepsilon_{k+1}\phi_k^\top.
\end{equation}
The problem of minimizing the loss function in \eqref{eqn:loss_normalized} leads naturally to the multivariable form of the well-known normalized gradient descent adaptive law given by \cite{Goodwin_1984}
\begin{equation} \label{eqn:gd_adaptive_law}
    \hat{\Theta}_{k+1} = \hat{\Theta}_k - \gamma\nabla\bar{f}_k(\hat{\Theta}_k).
\end{equation}

\subsection{Stability of the Gradient Descent Adaptive Law} \label{subsec:gd_stability}
We now show that Algorithm \ref{alg:adaptive_control} with \eqref{eqn:N}-\eqref{eqn:gd_adaptive_law} in place of ADAPT on line \ref{algln:adaptive_law} is a globally stable adaptive controller. The first step is to quantify the evolution of the parameter error, which is addressed in Proposition \ref{pro:gd_Lyapunov}.
\begin{proposition} \label{pro:gd_Lyapunov}
    The adaptive law in \eqref{eqn:N}-\eqref{eqn:gd_adaptive_law} results in a bounded parameter error $\tilde{\Theta}_k$ for all $k$ if $\mu > 0$ and $0 < \gamma < 2$ with
    \begin{equation} \label{eqn:gd_Lyapunov_function}
        V_k = \|\tilde{\Theta}_k\|_F^2
    \end{equation}
    as a Lyapunov function.
\end{proposition}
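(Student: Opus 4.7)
The plan is to show that $V_{k+1} - V_k \leq 0$ under the stated hypotheses on $\mu$ and $\gamma$, which immediately gives boundedness of $\|\tilde{\Theta}_k\|_F$ and hence of $\tilde{\Theta}_k$.

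First, I would derive the parameter error dynamics. Subtracting $\Theta_*$ (which is constant) from both sides of \eqref{eqn:gd_adaptive_law} and substituting the explicit gradient \eqref{eqn:f_gradient} gives
\begin{equation*}
    \tilde{\Theta}_{k+1} = \tilde{\Theta}_k - \frac{\gamma}{\N_k}\varepsilon_{k+1}\phi_k^\top,
\end{equation*}
where by the algebraic error model \eqref{eqn:error_model_1} we have $\varepsilon_{k+1} = \tilde{\Theta}_k\phi_k$. This is the key identity that lets the cross term collapse cleanly.

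Next, I would expand $V_{k+1} = \|\tilde{\Theta}_{k+1}\|_F^2 = \Tr(\tilde{\Theta}_{k+1}^\top \tilde{\Theta}_{k+1})$, yielding
\begin{equation*}
    V_{k+1} = V_k - \frac{2\gamma}{\N_k}\Tr(\tilde{\Theta}_k^\top \varepsilon_{k+1}\phi_k^\top) + \frac{\gamma^2}{\N_k^2}\|\varepsilon_{k+1}\phi_k^\top\|_F^2.
\end{equation*}
Using the cyclic property of the trace and $\phi_k^\top\tilde{\Theta}_k^\top = \varepsilon_{k+1}^\top$, the cross term simplifies to $\|\varepsilon_{k+1}\|^2$, while $\|\varepsilon_{k+1}\phi_k^\top\|_F^2 = \|\varepsilon_{k+1}\|^2\|\phi_k\|^2$. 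Combining,
\begin{equation*}
    V_{k+1} - V_k = \frac{\|\varepsilon_{k+1}\|^2}{\N_k}\left(-2\gamma + \gamma^2\frac{\|\phi_k\|^2}{\N_k}\right).
\end{equation*}

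Finally, I would invoke the definition \eqref{eqn:N} of the normalization factor: since $\N_k = \max\{\mu,\|\phi_k\|^2\} \geq \|\phi_k\|^2$, the ratio $\|\phi_k\|^2/\N_k \leq 1$. Hence
\begin{equation*}
    V_{k+1} - V_k \leq \frac{\gamma(\gamma - 2)}{\N_k}\|\varepsilon_{k+1}\|^2,
\end{equation*}
and the requirement $\mu > 0$ guarantees $\N_k > 0$ so the ratio is well-defined. For any $\gamma \in (0,2)$, the prefactor $\gamma(\gamma-2)$ is strictly negative, so $V_{k+1} \leq V_k$ for every $k$. Thus $V_k \leq V_0 = \|\tilde{\Theta}_0\|_F^2 < \infty$, which proves that $\tilde{\Theta}_k$ is bounded uniformly in $k$ and confirms $V_k$ as a valid Lyapunov function.

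The argument has no real obstacle — the one subtle step is recognizing that the choice $\N_k = \max\{\mu,\|\phi_k\|^2\}$ serves the dual purpose of (i) preventing division by zero (via $\mu > 0$) and (ii) giving the contraction factor $\|\phi_k\|^2/\N_k \leq 1$ that makes the quadratic-in-$\gamma$ term dominated by the linear-in-$\gamma$ term. Without this normalization, the permissible range of $\gamma$ would depend on the time-varying regressor magnitude and global stability would not follow.
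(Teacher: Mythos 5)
Your proposal is correct and follows essentially the same route as the paper: expand $\Delta V_k$ via the Frobenius-norm trace identity, use $\varepsilon_{k+1} = \tilde{\Theta}_k\phi_k$ to collapse the cross term to $\|\varepsilon_{k+1}\|^2$, and invoke $\N_k \geq \|\phi_k\|^2$ to obtain $\Delta V_k \leq -\gamma(2-\gamma)\|\varepsilon_{k+1}\|^2/\N_k \leq 0$ for $0 < \gamma < 2$. The only difference is cosmetic: you state the conclusion $V_k \leq V_0$ explicitly, which the paper defers to the proof of Theorem \ref{thm:gd_error_bounded}.
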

\begin{proof}
    See Subsection \ref{subsec:gd_Lyapunov_proof} in the Appendix.
\end{proof}

We now prove the convergence of the overall closed loop adaptive system described by the error model in \eqref{eqn:error_model_2}:
\begin{theorem} \label{thm:gd_error_bounded}
  The closed-loop adaptive system defined by \eqref{eqn:plant}, \eqref{eqn:feedback}, \eqref{eqn:error_model_2}, \eqref{eqn:prediction_error}, and \eqref{eqn:N}-\eqref{eqn:gd_adaptive_law} with $\mu > 0$ and $0 < \gamma < 2$ results in $\lim_{k \to \infty} \|e_k\| = 0$.
\end{theorem}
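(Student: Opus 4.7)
The plan is to combine three ingredients: a summability bound extracted from the Lyapunov calculation underlying Proposition \ref{pro:gd_Lyapunov}, the linear growth relation between $\phi_k$ and $e_k$ provided by Lemma \ref{lem:phi_O_x}, and a discrete Lyapunov analysis of the error model \eqref{eqn:error_model_2_1} enabled by Proposition \ref{pro:dlyap}. These will be combined to write a single contractive recursion in a quadratic function of $e_k$ with vanishing forcing.

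First, I would sharpen the conclusion of Proposition \ref{pro:gd_Lyapunov}. Expanding $V_{k+1} - V_k$ along \eqref{eqn:gd_adaptive_law} using \eqref{eqn:f_gradient} and the fact that $\|\phi_k\|^2/N_k \leq 1$ yields the tighter bound $V_{k+1} - V_k \leq -\gamma(2-\gamma)\|\varepsilon_{k+1}\|^2/N_k$. Since $V_k \geq 0$ and $0 < \gamma < 2$, summing gives $\sum_{k=0}^\infty \|\varepsilon_{k+1}\|^2/N_k \leq V_0 / [\gamma(2-\gamma)] < \infty$, so $\alpha_k := \|\varepsilon_{k+1}\|^2/N_k \to 0$.

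Second, I would convert this into a vanishing-gain bound on $\|\varepsilon_{k+1}\|^2$ in terms of $\|e_k\|^2$. Because $A_m$ is Schur and $\|r_k\| \leq r_{max}$, the reference state satisfies $\sup_k \|x_{mk}\| =: M_r < \infty$, so $\|x_{pk}\|^2 \leq 2\|e_k\|^2 + 2M_r^2$. Combining this with Lemma \ref{lem:phi_O_x} and the definition of $N_k$ produces a constant $\tilde{C}$ with $N_k \leq \tilde{C}(1+\|e_k\|^2)$, and therefore $\|\varepsilon_{k+1}\|^2 \leq \alpha_k \tilde{C}(1+\|e_k\|^2)$ with $\alpha_k \to 0$. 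Third, using Proposition \ref{pro:dlyap} I would pick $P \succ 0$ and $Q \succ 0$ with $A_m^\top P A_m - P = -Q$, set $W_k = e_k^\top P e_k$, and expand $W_{k+1}$ along \eqref{eqn:error_model_2_1}. Applying Young's inequality to the cross term $2 e_k^\top A_m^\top P B \varepsilon_{k+1}$ with parameter $\lambda_{\min}(Q)/2$ yields $W_{k+1} \leq (1-c) W_k + C_1 \|\varepsilon_{k+1}\|^2$, where $c = \lambda_{\min}(Q)/(2\lambda_{\max}(P)) \in (0,1)$ and $C_1$ depends only on $P$, $Q$, and $B$. Inserting the bound from the previous step gives $W_{k+1} \leq (1 - c + C_2 \alpha_k) W_k + C_1 \tilde{C} \alpha_k$ for some $C_2 > 0$. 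Once $k$ is large enough that $C_2 \alpha_k \leq c/2$, the recursion contracts with ratio $1 - c/2 < 1$ and an additive forcing tending to zero. A standard argument on linear recursions with vanishing inputs then produces $W_k \to 0$, which implies $\|e_k\| \to 0$.

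The main obstacle is the a priori circularity in the third step: the bound on $\|\varepsilon_{k+1}\|^2$ depends on $\|e_k\|^2$, so $\varepsilon_{k+1}$ cannot be treated as an exogenous disturbance to the stable system $e_{k+1} = A_m e_k + B\varepsilon_{k+1}$, and the signals $x_{pk}$, $\phi_k$, $N_k$ are not yet known to be bounded. The resolution is exactly that the coefficient multiplying $\|e_k\|^2$ is $\alpha_k \to 0$; for sufficiently large $k$ the closed-loop Lyapunov increment is strictly negative, and the resulting time-varying linear recursion in $W_k$ contracts. This single argument simultaneously yields boundedness of $e_k$ (and hence of $x_{pk}$, $\phi_k$, $N_k$, $\varepsilon_{k+1}$) and convergence of $e_k$ to zero, so no separate boundedness lemma is required.
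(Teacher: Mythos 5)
Your proposal is correct, and it reaches the conclusion by a genuinely different decomposition than the paper. The paper's proof also starts from $\|\varepsilon_{k+1}\|^2/\N_k \to 0$, but then splits into two cases (either $\varepsilon_k \to 0$ outright, or $\|\varepsilon_{k+1}\|/\|x_{pk}\| \to 0$), and in the second case runs a Lyapunov analysis on the \emph{plant state} $x_{pk}$ via \eqref{eqn:plant_rearranged}. Because that recursion carries the persistent forcing $Br_k$, the paper can only conclude exponential convergence of $x_{pk}^\top P x_{pk}$ to a compact set, i.e.\ boundedness of $x_{pk}$ and hence of $\phi_k$ and $\N_k$; it then feeds this back through \eqref{eqn:gd_lim_goes_to_0} to get $\varepsilon_k \to 0$ and finally invokes \eqref{eqn:error_model_2_1} to get $e_k \to 0$. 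You instead run the Lyapunov analysis on the \emph{tracking error} $e_k$ via \eqref{eqn:error_model_2_1}, whose only forcing is $B\varepsilon_{k+1}$; the key observation that $\|\varepsilon_{k+1}\|^2 \leq \alpha_k\tilde{C}(1+\|e_k\|^2)$ with $\alpha_k \to 0$ turns the potentially circular dependence into a time-varying recursion $W_{k+1} \leq (1-c+C_2\alpha_k)W_k + C_1\tilde{C}\alpha_k$ that eventually contracts, giving boundedness and $e_k \to 0$ in a single pass with no case split and no separate boundedness lemma. The one point worth making explicit is that $W_K$ is finite at the (finite) time $K$ after which $C_2\alpha_k \leq c/2$, which holds because the closed-loop recursion produces finite states at every step; with that noted, your argument is complete, and it also spares the reader the quadratic-root manipulations of \eqref{eqn:x_mag_quadratic}--\eqref{eqn:d_2}. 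Both proofs transfer verbatim to Theorem \ref{thm:ht_error_bounded}, since each uses only the property $\lim_{k\to\infty}\|\varepsilon_{k+1}\|^2/\N_k = 0$.
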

\begin{proof}
    See Subsection \ref{subsec:gd_stability_proof} in the Appendix.
\end{proof}

\begin{remark}
    Proposition \ref{pro:gd_Lyapunov} and Theorem \ref{thm:gd_error_bounded} are both well-known in the existing literature -- see e.g. \cite{Ioannou_2006}. We include both proofs in order to make apparent the similarity with the high-order tuner algorithm in the next section.
\end{remark}
\begin{remark}
    Theorem \ref{thm:gd_error_bounded} together with Proposition \ref{pro:gd_Lyapunov} extends the results of \cite{Kudva_Narendra1974} to show that the approach based on an algebraic error model can be used to guarantee closed-loop stability for discrete-time adaptive control.
\end{remark}

Another interesting point to note is the generality of the proof. As will become evident from our discussions in the next section, the same method of proof can be employed for any adaptive law that can guarantee the property $\lim_{k \to \infty} \frac{\|\varepsilon_{k+1}\|^2}{\N_k} = 0$.
In this case, this property followed from the structure of the Lyapunov increment (see \eqref{eqn:gd_Lyapunov_increment} in the Appendix).

\section{ADAPTIVE CONTROL WITH A HIGH-ORDER TUNER} \label{sec:ht_adaptive_control}




We now state the main result of this paper. For the plant given in \eqref{eqn:plant}, we propose the adaptive controller given in Algorithm \ref{alg:adaptive_control}, with Algorithm \ref{alg:high_order_tuner} in place of ADAPT in line \ref{algln:adaptive_law}. Algorithm \ref{alg:high_order_tuner} summarizes the high-order tuner adaptive law \cite{gaudio2020accelerated}.

\begin{algorithm}[b]
  \caption{ADAPT (Projected High Order Tuner) \cite{gaudio2020accelerated}}
  \label{alg:high_order_tuner}
  \begin{algorithmic}[1]
    \STATE {\bfseries Input:} time step $k$, current parameter estimate $\hat{\Theta}_k$, regressor $\phi_k$, prediction error $\varepsilon_{k+1}$, gains $\mu$, $\gamma$, $\beta$
    \IF{$k == 0$}
    \STATE $\hat{\Xi}_0 \gets \hat{\Theta}_0$
    \ELSE
    \STATE \textbf{Receive} $\hat{\Xi}_k$ from previous iteration
    \ENDIF
    \STATE Let $N_k = \max\{\mu, \|\phi_k\|^2\}$
    \STATE Let $\nabla\bar{f}_k(\hat{\Theta}_k) = \frac{1}{\N_k}\varepsilon_{k+1}\phi_k^\top$
    \STATE Let $\bar{\Theta}_{k} = \hat{\Theta}_k - \gamma\beta\nabla\bar{f}_k(\hat{\Theta}_k)$
    \STATE \textbf{Update} $\hat{\Theta}_{k+1} \gets \bar{\Theta}_{k} - \beta(\bar{\Theta}_{k} - \hat{\Xi}_k)$
    \STATE Let $\nabla\bar{f}_k(\hat{\Theta}_{k+1}) = \frac{1}{\N_k}((\hat{\Theta}_{k+1} - \hat{\Theta}_k)\phi_k + \varepsilon_{k+1})\phi_k^\top$
    \STATE \textbf{Update} $\hat{\Xi}_{k+1} \gets \hat{\Xi}_k - \gamma\nabla\bar{f}_k(\hat{\Theta}_{k+1})$
    \STATE \textbf{Return} $\hat{\Theta}_{k+1}$
  \end{algorithmic}
\end{algorithm}

The specific updates that constitute the high-order tuner are given by
\begin{align}
  \hat{\Xi}_{k+1} &= \hat{\Xi}_k - \gamma\nabla\bar{f}_k(\hat{\Theta}_{k+1}) \label{eqn:ht_Xi} \\
  \bar{\Theta}_{k} &= \hat{\Theta}_k - \gamma\beta\nabla\bar{f}_k(\hat{\Theta}_k) \label{eqn:ht_Theta_bar} \\
  \hat{\Theta}_{k+1} &= \bar{\Theta}_{k} - \beta(\bar{\Theta}_{k} - \hat{\Xi}_k) \label{eqn:ht_Theta}
\end{align}
where $\hat{\Xi}_k$ is an auxiliary parameter estimate, $\nabla\bar{f}_k(\hat{\Theta}_k) = \frac{\nabla L_k(\hat{\Theta}_k)}{\N_k}$ and $\nabla\bar{f}_k(\hat{\Theta}_{k+1}) = \frac{\nabla L_k(\hat{\Theta}_{k+1})}{\N_k}$, and the gradients of $L_k$ are given by
\begin{align}
  \nabla L_k(\hat{\Theta}_k) &= \tilde{\Theta}_k\phi_k\phi_k^\top = \varepsilon_{k+1}\phi_k^\top \label{eqn:ht_L_gradient_k} \\
  \nabla L_k(\hat{\Theta}_{k+1}) &= \tilde{\Theta}_{k+1}\phi_k\phi_k^\top \nonumber \\
  &= ((\hat{\Theta}_{k+1} - \hat{\Theta}_k)\phi_k + \varepsilon_{k+1})\phi_k^\top \label{eqn:ht_L_gradient_k+1}.
\end{align}
When the regressors are constants (i.e. $\phi_k = \phi =$ constant), \eqref{eqn:ht_Xi}-\eqref{eqn:ht_Theta} reduce to Nesterov's algorithm \cite{gaudio2020accelerated},
\begin{equation} \label{eqn:nesterov}
    \hat{\Theta}_{k+1} = \hat{\Theta}_k + \bar{\beta}(\hat{\Theta}_k - \hat{\Theta}_{k-1}) - \bar{\gamma}\nabla L(\hat{\Theta}_k + \bar{\beta}(\hat{\Theta}_k - \hat{\Theta}_{k-1}))
\end{equation}
where
\begin{equation} \label{eqn:nesterov_gains}
    \bar{\beta} = 1 - \beta, \,\, \bar{\gamma} = \gamma\beta.
\end{equation}
\eqref{eqn:ht_Xi}-\eqref{eqn:ht_Theta} are therefore a high-order counterpart to the adaptive law in \eqref{eqn:gd_adaptive_law}, and include momentum components (second term in \eqref{eqn:nesterov}) and acceleration components (third term in \eqref{eqn:nesterov}).


\subsection{Stability of the High-Order Tuner Adaptive Law} \label{subsec:ht_stability}

As before, we first quantify the evolution of the parameter error $\tilde{\Theta}_k$ and the auxiliary parameter estimate $\hat{\Xi}_k$ in the following proposition.
\begin{proposition} \label{pro:ht_Lyapunov}
    The adaptive law in \eqref{eqn:N} and \eqref{eqn:ht_Xi}-\eqref{eqn:ht_L_gradient_k+1} results in a bounded parameter error $\tilde{\Theta}_k$ and a bounded auxiliary parameter estimate $\hat{\Xi}_k$ for all $k$ if $\mu > 0$, $0 < \beta < 2$, $0 < \gamma < \sqrt{\frac{2 - \beta}{\beta}}$, and $\alpha > 0$ as defined in \eqref{eqn:ht_alpha} with
    \begin{equation} \label{eqn:ht_Lyapunov_function}
        V_k = \|\hat{\Xi}_k - \Theta_*\|_F^2 + \|\hat{\Theta}_k - \hat{\Xi}_k\|_F^2
    \end{equation}
    as a Lyapunov function.
\end{proposition}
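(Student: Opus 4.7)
The plan is to show that the Lyapunov difference $V_{k+1} - V_k$ can be expressed as a non-positive combination of $\bar{f}_k(\hat{\Theta}_{k+1})$ and $\bar{f}_k(\hat{\Theta}_k)$, under the stated hyperparameter conditions. This immediately yields boundedness of $V_k$ and therefore of both $\hat{\Xi}_k - \Theta_*$ and $\hat{\Theta}_k - \hat{\Xi}_k$ in Frobenius norm, from which boundedness of $\tilde{\Theta}_k$ follows by the triangle inequality.

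First I would introduce the shorthand $\tilde{\Xi}_k := \hat{\Xi}_k - \Theta_*$, $\delta_k := \hat{\Theta}_k - \hat{\Xi}_k$, $g_k := \nabla \bar{f}_k(\hat{\Theta}_k)$ and $g_{k+1} := \nabla \bar{f}_k(\hat{\Theta}_{k+1})$. From \eqref{eqn:ht_Xi}, the first summand expands as
\begin{equation*}
\|\tilde{\Xi}_{k+1}\|_F^2 - \|\tilde{\Xi}_k\|_F^2 = -2\gamma\langle \tilde{\Xi}_k, g_{k+1}\rangle_F + \gamma^2\|g_{k+1}\|_F^2.
\end{equation*}
For the second summand, I would combine \eqref{eqn:ht_Theta_bar}--\eqref{eqn:ht_Theta} to write $\hat{\Theta}_{k+1} - \hat{\Xi}_{k+1} = (1-\beta)\delta_k - (1-\beta)\gamma\beta\, g_k + \gamma g_{k+1}$, then square and expand the Frobenius norm. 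Collecting terms yields $V_{k+1} - V_k$ as a sum of inner products and squared norms involving $\delta_k$, $g_k$, $g_{k+1}$, and $\tilde{\Xi}_k$.

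Next I would convert these inner products to quantities controllable by convexity and smoothness of $\bar{f}_k$. The two key inputs are: (i) by the structural identity $g_{k+1} = \frac{1}{N_k}\tilde{\Theta}_{k+1}\phi_k\phi_k^\top$ together with $\|\phi_k\|^2/N_k \leq 1$, both $\|g_{k+1}\|_F^2 \leq 2\bar{f}_k(\hat{\Theta}_{k+1})$ and $\|g_k\|_F^2 \leq 2\bar{f}_k(\hat{\Theta}_k)$ (this is precisely $1$-smoothness, verified directly); and (ii) convexity of $\bar{f}_k$ with $\bar{f}_k(\Theta_*) = 0$, giving $\langle g_{k+1}, \hat{\Theta}_{k+1} - \Theta_*\rangle_F \geq \bar{f}_k(\hat{\Theta}_{k+1})$ and similarly at $\hat{\Theta}_k$. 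To use these, I would rewrite $\tilde{\Xi}_k = \frac{1}{\beta}\tilde{\Theta}_{k+1} - \frac{1-\beta}{\beta}\tilde{\Theta}_k + (1-\beta)\gamma g_k$, obtained by solving \eqref{eqn:ht_Theta} and \eqref{eqn:ht_Theta_bar} for $\hat{\Xi}_k$, so that $\langle g_{k+1}, \tilde{\Xi}_k\rangle_F$ splits into terms whose signs are known. The Cauchy--Schwarz and Young inequalities would handle the remaining cross terms $\langle \delta_k, g_\cdot\rangle_F$ and $\langle g_k, g_{k+1}\rangle_F$, trading them against $\|\delta_k\|_F^2$ and the $\|g_\cdot\|_F^2$ bounds above.

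The main obstacle will be the algebraic bookkeeping needed to verify that the coefficients of $\|\delta_k\|_F^2$, $\bar{f}_k(\hat{\Theta}_{k+1})$, and $\bar{f}_k(\hat{\Theta}_k)$ in the resulting bound are all non-positive, which is exactly where the constraints $0 < \beta < 2$ and $0 < \gamma < \sqrt{(2-\beta)/\beta}$ enter; the quantity $\alpha$ in \eqref{eqn:ht_alpha} should emerge as the positive margin in the non-positivity of these coefficients. With all three terms negative, $V_{k+1} \leq V_k$, so $V_k \leq V_0$ for all $k$, yielding the claimed boundedness. As a useful byproduct I would record the inequality $V_{k+1} - V_k \leq -\alpha \bar{f}_k(\hat{\Theta}_{k+1})$ (or an analogous form), since summing telescopically will give $\sum_k \|\varepsilon_{k+1}\|^2/N_k < \infty$, the key ingredient needed in the subsequent stability theorem --- in line with the remark at the end of Section~\ref{sec:gd_adaptive_control}.
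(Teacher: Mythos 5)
Your overall strategy is the same as the paper's: expand $\Delta V_k$ using the three update equations, and complete squares / apply Young's inequality until every surviving coefficient is non-positive, with $0<\beta<2$, $\gamma^2\beta<2-\beta$ and $\alpha>0$ supplying the margins. Your algebraic identities for $\hat{\Theta}_{k+1}-\hat{\Xi}_{k+1}$ and for $\tilde{\Xi}_k$ in terms of $\tilde{\Theta}_{k+1},\tilde{\Theta}_k,g_k$ are correct. The stylistic difference is that you phrase the estimates through generic convexity and $1$-smoothness of $\bar f_k$ (as in the cited Theorem 4 of the high-order tuner reference), whereas the paper works directly with the rank-one structure $\nabla L_k(\hat{\Theta})=\tilde{\Theta}\phi_k\phi_k^\top$ and keeps exact track of $\|\tilde{\Theta}_{k+1}\phi_k\|^2$ and $\|(\bar{\Theta}_k-\hat{\Xi}_k)\phi_k\|^2$; for the boundedness claim of the proposition itself either route should close, though you should verify that the generic bounds do not shrink the admissible $(\gamma,\beta)$ region below the one stated.

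The genuine gap is in your recorded byproduct. The bound $\Delta V_k\leq-\alpha\,\bar f_k(\hat{\Theta}_{k+1})$ controls the \emph{a posteriori} quantity $\|\tilde{\Theta}_{k+1}\phi_k\|^2/N_k$, whereas the stability theorem needs $\lim_{k\to\infty}\|\varepsilon_{k+1}\|^2/N_k=0$ with $\varepsilon_{k+1}=\tilde{\Theta}_k\phi_k$ the \emph{a priori} prediction error that enters the state equation through \eqref{eqn:error_model_2_1}. These are not interchangeable, and this is precisely the ``crucial step'' the paper adds beyond its reference: it retains a strictly negative multiple of $\|(\bar{\Theta}_k-\hat{\Xi}_k)\phi_k\|^2$ in the increment bound, substitutes the identity
\begin{equation*}
\tilde{\Theta}_{k+1}\phi_k=\Bigl(1-\tfrac{\gamma\beta\|\phi_k\|^2}{N_k}\Bigr)\varepsilon_{k+1}-\beta(\bar{\Theta}_k-\hat{\Xi}_k)\phi_k,
\end{equation*}
and completes the square once more (using $\gamma\beta<1$) to arrive at \eqref{eqn:ht_Lyapunov_increment}, a decrement proportional to $\|\varepsilon_{k+1}\|^2/N_k$. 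If your Young-inequality trades consume the entire $\|(\bar{\Theta}_k-\hat{\Xi}_k)\phi_k\|^2$ (or $\|\delta_k\|_F^2$) budget in establishing $\Delta V_k\leq 0$, this final conversion is unavailable. So either carry that term through with a strictly negative coefficient and perform the conversion explicitly, or restate your byproduct in terms of $\tilde{\Theta}_{k+1}\phi_k$ and supply the extra argument in the theorem's proof.
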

\begin{proof}
    See Subsection \ref{subsec:ht_Lyapunov_proof} in the Appendix.
\end{proof}

The main result of this paper, that the high-order tuner algorithm accomplishes the control objective of $\|e_k\| \to 0$ as $k \to \infty$, is now given in the following theorem.
\begin{theorem} \label{thm:ht_error_bounded}
  For the plant given in \eqref{eqn:plant}, Algorithm \ref{alg:adaptive_control} with Algorithm \ref{alg:high_order_tuner} as ADAPT and $\mu > 0$, $0 < \beta < 2$, $0 < \gamma < \sqrt{\frac{2 - \beta}{\beta}}$, and $\alpha > 0$ as defined in \eqref{eqn:ht_alpha} results in $\lim_{k \to \infty} \|e_k\| = 0$.
\end{theorem}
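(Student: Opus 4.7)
The plan is to reduce Theorem \ref{thm:ht_error_bounded} to the same template used in Theorem \ref{thm:gd_error_bounded}, exploiting the observation made in the remark that followed it: any adaptive law furnishing (i) a uniform bound on $\tilde{\Theta}_k$ and (ii) the key property $\|\varepsilon_{k+1}\|^2/N_k \to 0$ can be fed into the same closed-loop argument. Thus the proof will have two parts: first extract (i) and (ii) from Proposition \ref{pro:ht_Lyapunov}, and second invoke the argument of Theorem \ref{thm:gd_error_bounded} to conclude $\|e_k\| \to 0$.

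For ingredient (i), Proposition \ref{pro:ht_Lyapunov} guarantees that $V_k = \|\hat{\Xi}_k - \Theta_*\|_F^2 + \|\hat{\Theta}_k - \hat{\Xi}_k\|_F^2$ is non-increasing under the stated hyperparameter conditions. A simple application of the triangle (and Young) inequalities gives $\|\tilde{\Theta}_k\|_F^2 = \|\hat{\Theta}_k - \Theta_*\|_F^2 \leq 2V_k \leq 2V_0$, yielding a uniform bound on the parameter error. For ingredient (ii), I expect the Lyapunov increment $V_{k+1} - V_k$ established in Subsection \ref{subsec:ht_Lyapunov_proof} to be upper-bounded by $-\alpha\,\|\varepsilon_{k+1}\|^2/N_k$ for some $\alpha > 0$ (this is exactly the role of the hyperparameter condition on $\alpha$). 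Telescoping this inequality then produces $\sum_{k=0}^{\infty} \|\varepsilon_{k+1}\|^2/N_k \leq V_0/\alpha < \infty$, so in particular $\|\varepsilon_{k+1}\|^2/N_k \to 0$.

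With both ingredients in hand, the remainder of the proof mirrors Theorem \ref{thm:gd_error_bounded} verbatim: since $A_m$ is Schur and $\|r_k\| \leq r_{\max}$, $x_{mk}$ is uniformly bounded; by Lemma \ref{lem:phi_O_x}, $\|\phi_k\|^2 \leq C\|x_{pk}\|^2 \leq 2C(\|x_{mk}\|^2 + \|e_k\|^2)$; the bound on $\|\tilde{\Theta}_k\|_F$ gives $\|\varepsilon_{k+1}\|^2 \leq \|\tilde{\Theta}_k\|_F^2\|\phi_k\|^2$; and then an application of the discrete Lyapunov equation for $A_m$ from Proposition \ref{pro:dlyap} to $e_{k+1} = A_m e_k + B\varepsilon_{k+1}$, together with the fact that $\|\varepsilon_{k+1}\|^2/N_k \to 0$, yields boundedness of $\|e_k\|$ and hence of $\|x_{pk}\|$ and $N_k$. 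Once $N_k$ is uniformly bounded, $\|\varepsilon_{k+1}\|^2 \to 0$, and the Schur-stable cascade $e_{k+1} = A_m e_k + B\varepsilon_{k+1}$ delivers $\|e_k\| \to 0$.

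The main obstacle is the middle paragraph: teasing the precise negative term $-\alpha\|\varepsilon_{k+1}\|^2/N_k$ out of the Lyapunov increment associated with the high-order tuner. Unlike the gradient descent case, the increment involves cross terms between $\hat{\Xi}_k - \Theta_*$, $\hat{\Theta}_k - \hat{\Xi}_k$, and the two gradients $\nabla\bar{f}_k(\hat{\Theta}_k)$ and $\nabla\bar{f}_k(\hat{\Theta}_{k+1})$; bounding these cross terms and collecting a clean sign-definite remainder is where the restrictions $0 < \beta < 2$, $0 < \gamma < \sqrt{(2-\beta)/\beta}$, and $\alpha > 0$ will be needed. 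Once that computation is absorbed into Proposition \ref{pro:ht_Lyapunov}, however, the closed-loop stability argument is essentially a direct reuse of the gradient descent proof.
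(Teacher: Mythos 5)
Your proposal is correct and follows essentially the same route as the paper: telescope the Lyapunov increment from Proposition \ref{pro:ht_Lyapunov} to obtain $\lim_{k\to\infty}\|\varepsilon_{k+1}\|^2/\N_k = 0$, then reuse the closed-loop boundedness argument from Theorem \ref{thm:gd_error_bounded} verbatim. The one step you flag as the ``main obstacle''---extracting a sign-definite term $-\alpha'\|\varepsilon_{k+1}\|^2/\N_k$ from the increment---is exactly what the paper's proof of Proposition \ref{pro:ht_Lyapunov} supplies in \eqref{eqn:ht_Lyapunov_increment}, so your reduction goes through as planned.
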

\begin{proof}
    See Subsection \ref{subsec:ht_stability_proof} in the Appendix.
\end{proof}
\begin{remark}
    In Proposition \ref{pro:ht_Lyapunov} and Theorem \ref{thm:ht_error_bounded} (as in Proposition \ref{pro:gd_Lyapunov} and Theorem \ref{thm:gd_error_bounded}), we make no assumptions on the level of excitation in the input or on the initial parameter estimate $\hat{\Theta}_0$. Therefore, this adaptive law can be applied with any bounded input $\{r_k\}_{k \geq 0}$ and any initial parameter estimate.
\end{remark}
\section{SIMULATION RESULTS} \label{sec:simulations}

To show that the high-order tuner achieves state tracking performance that is comparable to or better than that of gradient descent, we conducted simulations of a simple plant as in \eqref{eqn:plant} using Algorithm \ref{alg:adaptive_control}. We compare the results using \eqref{eqn:N}-\eqref{eqn:gd_adaptive_law} and Algorithm \ref{alg:high_order_tuner} as ADAPT in line \ref{algln:adaptive_law}.

\subsection{Simulation Details}

As in \cite{scitech_paper}, the numerical experiments were conducted using the linearized short-period dynamics of a transport aircraft flying at a low altitude at 250 ft/s, taken from Exercise 1.2 in \cite{Lavretsky2013}. We add an integral error state $\dot{q}_e = q - r$ so that the pitch rate tracks a command signal $r$ with zero steady-state error, assume a discrete-time controller with a 100 Hz sampling rate, and discretize the resulting dynamics using a zero-order hold to obtain the nominal discrete-time dynamics
\begin{equation} \label{eqn:simulation_plant}
    x_{p(k+1)} = Ax_{pk} + bu_k + b_rr_k
\end{equation}
where $x_{pk} = \left[\alpha, q, q_e\right]^\top$. Further details can be found in \cite{scitech_paper}.

We considered the reference model
\begin{equation}
    x_{m(k+1)} = A_mx_{mk} + b_rr_k = (A + b\theta_{LQR}^\top)x_{mk} + b_rr_k
\end{equation}
 where $\theta_{LQR}$ is the gain matrix obtained from LQR on the nominal discrete-time dynamics with cost matrices $Q = \mathrm{diag}(\left[0, 0, 1\right]^\top)$ and $R = 1$. We then assumed a parametric uncertainty such that $A_p = A_m - b\theta_*^\top$ for some unknown $k_*$ and applied the certainty equivalence control input
\begin{equation}
    u_k = \hat{\theta}_k^\top x_{pk}
\end{equation}
with $\hat{\theta}_0 = \theta_{LQR}$. The resulting error model was given by
\begin{equation}
    e_{k+1} = A_me_k + b\tilde{\theta}_k^\top\phi_k = A_me_k + b\varepsilon_{k+1}
\end{equation}
with $\phi_k = x_{pk}$.

\subsection{Results and Discussion}

A Monte Carlo simulation was conducted with 2000 trials. In each trial, $\theta_*$ was obtained by multiplying each element of $\theta_{LQR}$ by an i.i.d random value uniformly distributed over $[-0.5, 2]$. The adaptive control task was to track the reference model with $r_k = 5\ \forall k \geq 0$ starting from $x_{p0} = x_{m0} = 0$. For simplicity, we chose $\mu = 1$. Hyperparameter tuning was carried out to ensure the fastest possible convergence of $\|e_k\|$ to zero. For gradient descent we chose $\gamma = 1$, corresponding to the well-known projection algorithm \cite{Goodwin_1984}. For the high-order tuner, we found that choosing $\gamma$ as large as possible for any given value of $\beta$ (see Subsection \ref{subsec:ht_hyperparameters} in the Appendix) led to fastest reduction of both $\|e_k\|$ and $|\varepsilon_k|$.

Figures \ref{fig:e_simulation} and \ref{fig:epsilon_simulation} show the results of our simulations. Solid lines are mean values over all trials, and the darker and lighter windows around them are 50\% and 90\% confidence intervals, respectively. We find that under both performance metrics, the high-order tuner performs comparably to gradient descent. Intriguingly, however, the high-order tuner tends to produce slightly larger values of $|\varepsilon_k|$ and slightly smaller values of $\|e_k\|$. This result runs counter to the intuition provided by \eqref{eqn:error_model_2_1}.

Further research is needed to understand this result, as well as to understand why it appears best to choose $\gamma$ as large as possible for any given $\beta$. One possible source of intuition may be the reduction to Nesterov's algorithm in \eqref{eqn:nesterov}-\eqref{eqn:nesterov_gains} It is possible that having an extra hyperparameter allows the adaptive law to be somewhat tuned to the particular dynamical system. It is also possible that there exists another Lyapunov function besides the one in \eqref{eqn:ht_Lyapunov_function}, which could provide more clarity.

\begin{figure}
    \centering
    \includegraphics[width=0.5\textwidth]{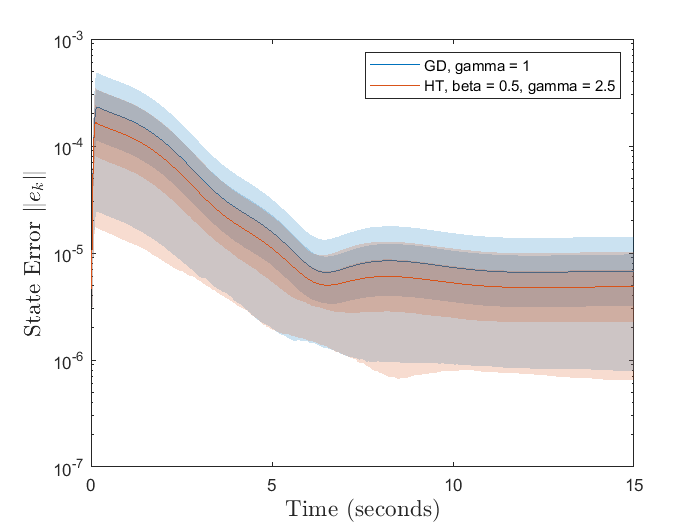}
    \caption{State error $\|e_k\|$ of the simulation results of both adaptive laws. During each trial, the unknown parameter $\theta_*$ was obtained by multiplying each element of $\theta_{LQR}$ by an i.i.d random variable in $\mathcal{U}(-0.5, 2)$. Mean values over all trials and 50\% and 90\% confidence intervals are plotted.}
    \label{fig:e_simulation}
\end{figure}
\begin{figure}
    \centering
    \includegraphics[width=0.5\textwidth]{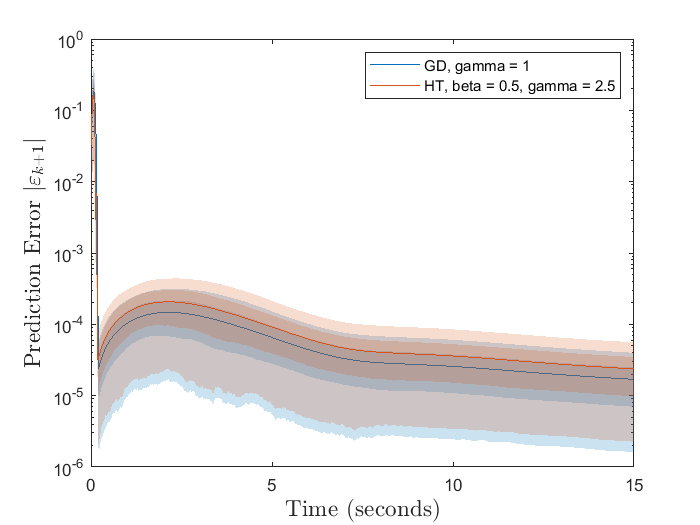}
    \caption{Prediction error $|\varepsilon_{k+1}|$ of the simulation results of both adaptive laws. During each trial, the unknown parameter $\theta_*$ was obtained by multiplying each element of $\theta_{LQR}$ by an i.i.d random variable in $\mathcal{U}(-0.5, 2)$. Mean values over all trials and 50\% and 90\% confidence intervals are plotted.}
    \label{fig:epsilon_simulation}
\end{figure}
\section{CONCLUSIONS AND FUTURE WORKS} \label{sec:conclusion}


In this paper, we present a novel algorithm for model-reference adaptive control of the class of nonlinear systems given in \eqref{eqn:plant}. This algorithm uses a causal filtering method to convert the resulting dynamical error model into an algebraic error model and applies the discrete-time high-order tuner presented in \cite{gaudio2020accelerated} as the adaptive law. Crucially, the algorithm is shown to guarantee that $\|e_k\| \to 0$ as $k \to \infty$ using a simple and general proof method for systems with all states accessible. In this proof, we make no assumptions on the initial parameter estimate or the amount of excitation in the input.

We also provide simulation results showing that the high-order tuner achieves comparable or slightly better performance than the standard gradient descent-based adaptive law. Simulations in \cite{Gaudio2019a} have shown accelerated convergence of the state error $e_k$ to zero using a continuous-time equivalent of the high-order tuner algorithm. Further research is needed to understand the influence of the choice of gains on the performance of the high-order tuner, and to explore how this accelerated convergence might be realized in discrete time.


\bibliographystyle{IEEEtran}
\bibliography{IEEEabrv,References.bib}

\begin{thebibliography}{10}
\providecommand{\url}[1]{#1}
\csname url@samestyle\endcsname
\providecommand{\newblock}{\relax}
\providecommand{\bibinfo}[2]{#2}
\providecommand{\BIBentrySTDinterwordspacing}{\spaceskip=0pt\relax}
\providecommand{\BIBentryALTinterwordstretchfactor}{4}
\providecommand{\BIBentryALTinterwordspacing}{\spaceskip=\fontdimen2\font plus
\BIBentryALTinterwordstretchfactor\fontdimen3\font minus
  \fontdimen4\font\relax}
\providecommand{\BIBforeignlanguage}[2]{{%
\expandafter\ifx\csname l@#1\endcsname\relax
\typeout{** WARNING: IEEEtran.bst: No hyphenation pattern has been}%
\typeout{** loaded for the language `#1'. Using the pattern for}%
\typeout{** the default language instead.}%
\else
\language=\csname l@#1\endcsname
\fi
#2}}
\providecommand{\BIBdecl}{\relax}
\BIBdecl

\bibitem{Narendra2005}
K.~S. Narendra and A.~M. Annaswamy, \emph{Stable Adaptive Systems}.\hskip 1em
  plus 0.5em minus 0.4em\relax Dover, 2005.

\bibitem{Annaswamy2021hist}
A.~M. Annaswamy and A.~L. Fradkov, ``A historical perspective of adaptive
  control and learning,'' \emph{Annual Reviews in Control}, vol.~52, pp.
  18--41, 2021.

\bibitem{Landau11}
I.~D. Landau, R.~Lozano, M.~M'Saad, and A.~Karimi, \emph{Adaptive Control:
  Algorithms, Analysis and Applications}.\hskip 1em plus 0.5em minus
  0.4em\relax Springer Science \& Business Media, 2011.

\bibitem{Goodwin_1984}
G.~C. Goodwin and K.~S. Sin, \emph{Adaptive Filtering Prediction and
  Control}.\hskip 1em plus 0.5em minus 0.4em\relax Prentice Hall, 1984.

\bibitem{Dean_2018}
S.~Dean, H.~Mania, N.~Matni, B.~Recht, and S.~Tu, ``Regret bounds for robust
  adaptive control of the linear quadratic regulator,'' in \emph{Advances in
  Neural Information Processing Systems 31}, S.~Bengio, H.~Wallach,
  H.~Larochelle, K.~Grauman, N.~Cesa-Bianchi, and R.~Garnett, Eds.\hskip 1em
  plus 0.5em minus 0.4em\relax Curran Associates, Inc., 2018, pp. 4192--4201.

\bibitem{Dean_2018a}
------, ``On the sample complexity of the linear quadratic regulator,''
  \emph{arXiv preprint arXiv:1710.01688}, 2018.

\bibitem{quindlen2015}
J.~F. Quindlen, G.~Chowdhary, and J.~P. How, ``Hybrid model reference adaptive
  control for unmatched uncertainties,'' in \emph{2015 American Control
  Conference (ACC)}, 2015, pp. 1125--1130.

\bibitem{chowdhary2013}
G.~Chowdhary, T.~Yucelen, M.~Mühlegg, and E.~N. Johnson, ``Concurrent learning
  adaptive control of linear systems with exponentially convergent bounds,''
  \emph{International Journal of Adaptive Control and Signal Processing},
  vol.~27, no.~4, pp. 280--301, 2013.

\bibitem{pan2016}
Y.~Pan, J.~Zhang, and H.~Yu, ``Model reference composite learning control
  without persistency of excitation,'' \emph{IET Control Theory \&
  Applications}, vol.~10, no.~16, pp. 1963--1971, 2016.

\bibitem{joshi2019}
G.~Joshi and G.~Chowdhary, ``Hybrid direct-indirect adaptive control of
  nonlinear system with unmatched uncertainty,'' in \emph{2019 6th
  International Conference on Control, Decision and Information Technologies
  (CoDIT)}, 2019, pp. 127--132.

\bibitem{Morse_1992}
A.~S. Morse, ``High-order parameter tuners for the adaptive control of linear
  and nonlinear systems,'' in \emph{Systems, Models and Feedback: Theory and
  Applications}.\hskip 1em plus 0.5em minus 0.4em\relax Birkhäuser Boston,
  1992, pp. 339--364.

\bibitem{gaudio2020accelerated}
J.~E. Gaudio, A.~M. Annaswamy, J.~M. Moreu, M.~A. Bolender, and T.~E. Gibson,
  ``Accelerated learning with robustness to adversarial regressors,''
  \emph{arXiv preprint arXiv:2005.01529: 3rd L4DC conference}, 2021.

\bibitem{Polyak_1964}
B.~T. Polyak, ``Some methods of speeding up the convergence of iteration
  methods,'' \emph{{USSR} Computational Mathematics and Mathematical Physics},
  vol.~4, no.~5, pp. 1--17, jan 1964.

\bibitem{Nesterov_1983}
Y.~Nesterov, ``A method of solving a convex programming problem with
  convergence rate ${O}(1/k^2)$,'' \emph{Soviet Mathematics Doklady}, vol.~27,
  pp. 372--376, 1983.

\bibitem{Nesterov_2018}
------, \emph{Lectures on Convex Optimization}.\hskip 1em plus 0.5em minus
  0.4em\relax Springer, 2018.

\bibitem{Wibisono_2016}
A.~Wibisono, A.~C. Wilson, and M.~I. Jordan, ``A variational perspective on
  accelerated methods in optimization,'' \emph{Proceedings of the National
  Academy of Sciences}, vol. 113, no.~47, pp. E7351--E7358, nov 2016.

\bibitem{Gaudio2019a}
J.~E. Gaudio, T.~E. Gibson, A.~M. Annaswamy, and M.~A. Bolender, ``{Provably
  Correct Learning Algorithms in the Presence of Time-Varying Features Using a
  Variational Perspective},'' Tech. Rep., 2019.

\bibitem{scitech_paper}
P.~Fisher and A.~Annaswamy, ``Investigating noise rejection with gradient-based
  update laws in discrete-time adaptive control,'' in \emph{AIAA SCITECH 2023
  Forum}.

\bibitem{Kudva_Narendra1974}
P.~Kudva and K.~Narendra, ``An identification procedure for discrete
  multivariable systems,'' \emph{IEEE Transactions on Automatic Control},
  vol.~19, no.~5, pp. 549--552, 1974.

\bibitem{Lion1967}
\BIBentryALTinterwordspacing
P.~M. Lion, ``Rapid identification of linear and nonlinear systems.''
  \emph{AIAA Journal}, vol.~5, no.~10, pp. 1835--1842, 1967. [Online].
  Available: \url{https://doi.org/10.2514/3.4313}
\BIBentrySTDinterwordspacing

\bibitem{Kreisselmeier1977}
G.~Kreisselmeier, ``Adaptive observers with exponential rate of convergence,''
  \emph{IEEE Transactions on Automatic Control}, vol.~22, no.~1, pp. 2--8,
  1977.

\bibitem{Slotine1989}
\BIBentryALTinterwordspacing
J.-J.~E. Slotine and W.~Li, ``Composite adaptive control of robot
  manipulators,'' \emph{Automatica}, vol.~25, no.~4, pp. 509--519, 1989.
  [Online]. Available:
  \url{https://www.sciencedirect.com/science/article/pii/0005109889900940}
\BIBentrySTDinterwordspacing

\bibitem{Duarte1989}
M.~Duarte and K.~Narendra, ``Combined direct and indirect approach to adaptive
  control,'' \emph{IEEE Transactions on Automatic Control}, vol.~34, no.~10,
  pp. 1071--1075, 1989.

\bibitem{Ortega2016_DREM}
S.~Aranovskiy, A.~Bobtsov, R.~Ortega, and A.~Pyrkin, ``Parameters estimation
  via dynamic regressor extension and mixing,'' in \emph{2016 American Control
  Conference (ACC)}, 2016, pp. 6971--6976.

\bibitem{Cui_preprint}
Y.~Cui and A.~Annaswamy, ``Accelerated performance and accelerated learning
  with discrete-time high-order tuners,'' \emph{arXiv preprint
  arXiv:2203.16438}, 2022.

\bibitem{hespanha2018}
J.~P. Hespanha, \emph{Linear Systems Theory}, 2nd~ed.\hskip 1em plus 0.5em
  minus 0.4em\relax Princeton University Press, Feb. 2018.

\bibitem{Ioannou_2006}
\BIBentryALTinterwordspacing
P.~Ioannou and B.~Fidan, \emph{Adaptive Control Tutorial}.\hskip 1em plus 0.5em
  minus 0.4em\relax Philadelphia, PA: Society for Industrial and Applied
  Mathematics, 2006. [Online]. Available:
  \url{https://epubs.siam.org/doi/abs/10.1137/1.9780898718652}
\BIBentrySTDinterwordspacing

\bibitem{Lavretsky2013}
E.~Lavretsky and K.~A. Wise, \emph{Robust and Adaptive Control with Aerospace
  Applications}.\hskip 1em plus 0.5em minus 0.4em\relax Springer London, 2013.

\end{thebibliography}
\appendix

Proofs of all propositions and theorems are provided below, as well as a discussion of allowable high-order tuner hyperparameters. While the proof of Proposition \ref{pro:gd_Lyapunov} is well-known, we include it for completeness. The proof of Proposition \ref{pro:ht_Lyapunov} is based on the proof of Theorem 4 in \cite{gaudio2020accelerated}, but differs in several crucial steps that are needed for the proof of Theorem \ref{thm:ht_error_bounded}. Finally, as the bulk of the proof of Theorem \ref{thm:gd_error_bounded} can be applied verbatim to Theorem \ref{thm:ht_error_bounded}, Subsection \ref{subsec:ht_stability_proof} only details the differences.

\subsection{Proof of Proposition \ref{pro:gd_Lyapunov}} \label{subsec:gd_Lyapunov_proof}


    Consider the candidate Lyapunov function given in \eqref{eqn:gd_Lyapunov_function}.
    Applying \eqref{eqn:f_gradient}-\eqref{eqn:gd_adaptive_law} and rearranging, we obtain
    \begin{flalign*}
        &\Delta V_k = \|\tilde{\Theta}_{k+1}\|_F^2 - \|\tilde{\Theta}_k\|_F^2 && \\
        &\indenti{\Delta V_k} \leq \|\hat{\Theta}_{k+1}' - \Theta_*\|_F^2 - \|\tilde{\Theta}_k\|_F^2 && \\
        &\indenti{\Delta V_k} = \|\tilde{\Theta}_k - \frac{\gamma}{\N_k}\varepsilon_{k+1}\phi_k^\top\|_F^2 - \|\tilde{\Theta}_k\|_F^2 && \\
        &\indenti{\Delta V_k} = \Tr[(\tilde{\Theta}_k - \frac{\gamma}{\N_k}\varepsilon_{k+1}\phi_k^\top)^\top(\tilde{\Theta}_k - \frac{\gamma}{\N_k}\varepsilon_{k+1}\phi_k^\top)] && \\
        &\indenti{\Delta V_k =} - \Tr[\tilde{\Theta}_k^\top\tilde{\Theta}_k] && \\
        &\indenti{\Delta V_k} = \Tr[-\frac{\gamma}{\N_k}\tilde{\Theta}_k^\top\varepsilon_{k+1}\phi_k^\top - \frac{\gamma}{\N_k}\phi_k\varepsilon_{k+1}^\top\tilde{\Theta}_k && \\
        &\indenti{\Delta V_k = \Tr.} + \frac{\gamma^2}{\N_k^2}\phi_k\varepsilon_{k+1}^\top\varepsilon_{k+1}\phi_k^\top] && \\
        &\indenti{\Delta V_k} = -\frac{2\gamma}{\N_k}\varepsilon_{k+1}^\top\tilde{\Theta}_k\phi_k + \frac{\gamma^2\|\phi_k\|^2}{\N_k^2}\|\varepsilon_{k+1}\|^2 && \\
        &\indenti{\Delta V_k} = -\frac{\gamma}{\N_k}(2 - \frac{\gamma\|\phi_k\|^2}{\N_k})\|\varepsilon_{k+1}\|^2.
    \end{flalign*}
    Finally, noting that $\N_k \geq \|\phi_k\|^2$ by \eqref{eqn:N}, we are left with
    \begin{equation} \label{eqn:gd_Lyapunov_increment}
        \Delta V_k \leq -\gamma(2 - \gamma)\frac{\|\varepsilon_{k+1}\|^2}{\N_k} \leq 0
    \end{equation}
    if $0 < \gamma < 2$.
\subsection{Proof of Theorem \ref{thm:gd_error_bounded}} \label{subsec:gd_stability_proof}

Consider the Lyapunov function in \eqref{eqn:gd_Lyapunov_function}. By Proposition \ref{pro:gd_Lyapunov}, we know that $V_k \geq 0$ and $\Delta V_k \leq 0$. It follows immediately that $V_k$ is bounded, and that
\begin{gather}
    0 \leq \lim_{k \tends \infty} V_k \leq V_0 \implies \nonumber \\
    0 \leq V_0 + \sum_{k = 0}^\infty \Delta V_k \leq V_0 \implies \nonumber \\
    -V_0 \leq \sum_{k = 0}^\infty \Delta V_k \leq 0 \implies \nonumber \\
    \lim_{k \tends \infty} \Delta V_k = 0.
\end{gather}
Substituting \eqref{eqn:gd_Lyapunov_increment}, we get
\begin{equation} \label{eqn:gd_lim_goes_to_0}
    \lim_{k \tends \infty} \frac{\|\varepsilon_{k+1}\|^2}{\N_k} = 0.
\end{equation}

Note that \eqref{eqn:gd_lim_goes_to_0} with \eqref{eqn:N} implies that if $\|\phi_k\|$ is bounded, then $\lim_{k \tends \infty} \varepsilon_k = 0$, which as stated earlier implies that $\lim_{k \tends \infty} \|e_k\| = 0$. The final step is to show that $\|\phi_k\|$ is bounded.
Lemma \ref{lem:phi_O_x} together with \eqref{eqn:gd_lim_goes_to_0} implies that
\begin{equation}
    \lim_{k \to \infty} \frac{\|\varepsilon_{k+1}\|^2}{\max\{\mu, C\|x_{pk}\|^2\}} = 0,
\end{equation}
which in turn implies that either (1) $\lim_{k \to \infty} \|\varepsilon_k\| = 0$ or (2) $\lim_{k \to \infty} \frac{\|\varepsilon_{k+1}\|}{\|x_{pk}\|} = 0$. If case (1) is satisfied, then the proof is complete. Thus, we henceforth assume case (2).

Now consider the function $V_k^x = x_{pk}^\top Px_{pk}$ for a $P = P^\top > 0$ which satisfies \eqref{eqn:dlyap} for $A_m$ and any $Q = Q^\top > 0$. Using \eqref{eqn:plant_rearranged}, \eqref{eqn:error_model_1}, and \eqref{eqn:dlyap}, the increment of $V_k^x$ is given by
\begin{flalign}
    &\Delta V_k^x = -x_{pk}^\top Qx_{pk} + r_k^\top B^\top PBr_k + \varepsilon_{k+1}^\top B^\top PB\varepsilon_{k+1} && \nonumber \\
    &\indenti{\Delta V_k^x =} + 2x_{pk}^\top A_m^\top PBr_k + 2x_{pk}^\top A_m^\top PB\varepsilon_{k+1} && \nonumber \\
    &\indenti{\Delta V_k^x =} + 2r_k^\top B^\top PB\varepsilon_{k+1} && \nonumber \\
    &\indenti{\Delta V_k^x} \leq -\lambda_{min}(Q)\|x_{pk}\|^2 + \|B^\top PB\|_2r_{max}^2 && \nonumber \\
    &\indenti{\Delta V_k^x =} + \|B^\top PB\|_2\|\varepsilon_{k+1}\|^2 + 2\|A_m^\top PB\|_2\|x_{pk}\|r_{max} && \nonumber \\
    &\indenti{\Delta V_k^x =} + 2\|A_m^\top PB\|_2\|x_{pk}\|\|\varepsilon_{k+1}\| && \nonumber \\
    &\indenti{\Delta V_k^x =} + 2\|B^\top PB\|_2r_{max}\|\varepsilon_{k+1}\| && \nonumber \\
    &\textstyle\indenti{\Delta V_k^x} = -(\lambda_{min}(Q) - \frac{\|B^\top PB\|_2r_{max}^2}{\|x_{pk}\|^2} && \nonumber \\
    &\textstyle\indenti{\Delta V_k^x = -.} - \frac{\|B^\top PB\|_2\|\varepsilon_{k+1}\|^2}{\|x_{pk}\|^2} - \frac{2\|A_m^\top PB\|_2r_{max}}{\|x_{pk}\|} && \nonumber \\
    &\textstyle\indenti{\Delta V_k^x = -.} - \frac{2\|A_m^\top PB\|_2\|\varepsilon_{k+1}\|}{\|x_{pk}\|} && \nonumber \\
    &\textstyle\indenti{\Delta V_k^x = -.} - \frac{2\|B^\top PB\|_2r_{max}\|\varepsilon_{k+1}\|}{\|x_{pk}\|^2})\|x_{pk}\|^2 && \nonumber \\
    &\textstyle\indenti{\Delta V_k^x} \leq -(\lambda_{min}(Q) - \frac{2\|A_m^\top PB\|_2(r_{max} + \|\varepsilon_{k+1}\|)}{\|x_{pk}\|} && \nonumber \\
    &\textstyle\indenti{\Delta V_k^x = -.} - \frac{2\|B^\top PB\|_2(r_{max}^2 + \|\varepsilon_{k+1}\|^2)}{\|x_{pk}\|^2})\frac{1}{\lambda_{max}(P)}V_k^x \label{eqn:V_x_increment} &&
\end{flalign}
whenever $\|x_{pk}\| \neq 0$. Choose any constant $c_1$ such that $0 < c_1 < \lambda_{min}(Q)$. Then, $\Delta V_k^x \leq -\frac{c_1}{\lambda_{max}(P)}V_k^x$ whenever
\begin{gather}
    \begin{aligned}
        &\textstyle\lambda_{min}(Q) - \frac{2\|A_m^\top PB\|_2(r_{max} + \|\varepsilon_{k+1}\|)}{\|x_{pk}\|} \nonumber \\
        &\textstyle- \frac{2\|B^\top PB\|_2(r_{max}^2 + \|\varepsilon_{k+1}\|^2)}{\|x_{pk}\|^2} \geq c_1 \iff
    \end{aligned} \\
    \begin{aligned}
        &\textstyle\lambda_{min}(Q) - \frac{2\|A_m^\top PB\|_2\|\varepsilon_{k+1}\|}{\|x_{pk}\|} - \frac{2\|B^\top PB\|_2\|\varepsilon_{k+1}\|^2}{\|x_{pk}\|^2} \nonumber \\
        &\textstyle- \frac{2\|A_m^\top PB\|_2r_{max}}{\|x_{pk}\|} - \frac{2\|B^\top PB\|_2r_{max}^2}{\|x_{pk}\|^2} \geq c_1 \iff
    \end{aligned} \\
    \begin{aligned}
        &\textstyle\frac{1}{\|x_{pk}\|} \leq \sqrt{\frac{\|A_m^\top PB\|_2^2}{4\|B^\top PB\|_2^2r_{max}^2} + \frac{d_1\left(\frac{\|\varepsilon_{k+1}\|}{\|x_{pk}\|}\right) - c_1}{2\|B^\top PB\|_2r_{max}^2}} \\
        &\textstyle\indenti{\frac{1}{\|x_{pk}\|} =} - \frac{\|A_m^\top PB\|_2}{2\|B^\top PB\|_2r_{max}} \label{eqn:x_mag_quadratic}
    \end{aligned}
\end{gather}
where
\begin{equation}
    \begin{aligned}
        \textstyle d_1(\frac{\|\varepsilon_{k+1}\|}{\|x_{pk}\|}) =&\textstyle \lambda_{min}(Q) - \frac{2\|A_m^\top PB\|_2\|\varepsilon_{k+1}\|}{\|x_{pk}\|} \\
        &\textstyle- \frac{2\|B^\top PB\|_2\|\varepsilon_{k+1}\|^2}{\|x_{pk}\|^2}. \label{eqn:d_1}
    \end{aligned}
\end{equation}
Further define
\begin{equation}
    \begin{aligned}
        \textstyle d_2(c_2) =& \textstyle\bigg(\sqrt{\frac{\|A_m^\top PB\|_2^2}{4\|B^\top PB\|_2^2r_{max}^2} + \frac{c_2}{2\|B^\top PB\|_2r_{max}^2}} \\
        &\textstyle- \frac{\|A_m^\top PB\|_2}{2\|B^\top PB\|_2r_{max}}\bigg)^{-1}. \label{eqn:d_2}
    \end{aligned}
\end{equation}

The proof concludes as follows: choose any constants $c_1 \in (0, \lambda_{min}(Q))$ and $c_2 \in (0, \lambda_{min}(Q) - c_1)$. Then, because $\lim_{k \to \infty} \frac{\|\varepsilon_{k+1}\|}{\|x_{pk}\|} = 0$, there exists a time step $K(c_1, c_2)$ such that $d_1(\frac{\|\varepsilon_{k+1}\|}{\|x_{pk}\|}) - c_1 \geq c_2\ \forall k \geq K(c_1, c_2)$. Thus, for all $k \geq K(c_1, c_2)$, we have 
\begin{gather*}
    \textstyle V_k^x \geq \lambda_{min}(P)d_2(c_2)^2 \implies \|x_{pk}\| \geq d_2(c_2) \iff \\
    \textstyle\frac{1}{\|x_{pk}\|} \leq \frac{1}{d_2(c_2)} \implies \eqref{eqn:x_mag_quadratic} \implies \Delta V_k^x \leq -\frac{c_1}{\lambda_{max}(P)}V_k^x.
\end{gather*}

Therefore, for all but finitely many time steps, $V_k^x$ converges exponentially to a compact set, implying that $x_{pk}$ and therefore $\phi_k$ is bounded.

\subsection{Proof of Proposition \ref{pro:ht_Lyapunov}} \label{subsec:ht_Lyapunov_proof}


Consider the candidate Lyapunov function given in \eqref{eqn:ht_Lyapunov_function}. Defining $\tilde{\Xi}_k = \hat{\Xi}_k - \Theta_*$
and applying \eqref{eqn:ht_Xi}-\eqref{eqn:ht_L_gradient_k+1}, we obtain
\begin{flalign*}
    &\Delta V_k = \|\tilde{\Xi}_{k+1}\|_F^2 - \|\tilde{\Xi}_k\|_F^2 && \\
    &\indenti{\Delta V_k =} + \|\hat{\Theta}_{k+1} - \hat{\Xi}_{k+1}\|_F^2 - \|\hat{\Theta}_k - \hat{\Xi}_k\|_F^2 &&
\end{flalign*}
\begin{flalign*}
    &\indenti{\Delta V_k} = \|\tilde{\Xi}_k - \frac{\gamma}{\N_k}\nabla L_k(\hat{\Theta}_{k+1})\|_F^2 - \|\tilde{\Xi}_k\|_F^2 && \\
    &\indenti{\Delta V_k =} + \|\bar{\Theta}_k - \beta(\bar{\Theta}_k - \hat{\Xi}_k) - \hat{\Xi}_k + \frac{\gamma}{\N_k}\nabla L_k(\hat{\Theta}_{k+1})\|_F^2 && \\
    &\indenti{\Delta V_k =} - \|\hat{\Theta}_k - \hat{\Xi}_k\|_F^2 &&
\end{flalign*}
\begin{flalign*}
    &\indenti{\Delta V_k} = \frac{\gamma^2}{\N_k^2}\|\nabla L_k(\hat{\Theta}_{k+1})\|_F^2 - \frac{2\gamma}{\N_k}\Tr[\tilde{\Xi}_k^\top\nabla L_k(\hat{\Theta}_{k+1})] && \\
    &\indenti{\Delta V_k =} + (1 - \beta)^2\|\bar{\Theta}_k - \hat{\Xi}_k\|_F^2 - \|\hat{\Theta}_k - \hat{\Xi}_k\|_F^2 && \\
    &\indenti{\Delta V_k =} + \frac{2\gamma(1 - \beta)}{\N_k}\Tr[(\bar{\Theta}_k - \hat{\Xi}_k)^\top\nabla L_k(\hat{\Theta}_{k+1})] && \\
    &\indenti{\Delta V_k =} + \frac{\gamma^2}{\N_k^2}\|\nabla L_k(\hat{\Theta}_{k+1})\|_F^2 &&
\end{flalign*}
\begin{flalign*}
    &\indenti{\Delta V_k} = \frac{2\gamma^2}{\N_k^2}\|\nabla L_k(\hat{\Theta}_{k+1})\|_F^2 - \frac{2\gamma}{\N_k}\Tr[\tilde{\Theta}_{k+1}^\top\nabla L_k(\hat{\Theta}_{k+1})] && \\
    &\indenti{\Delta V_k =} - \frac{2\gamma}{\N_k}\Tr[(\hat{\Xi}_k - \hat{\Theta}_{k+1})^\top\nabla L_k(\hat{\Theta}_{k+1})] && \\
    &\indenti{\Delta V_k =} + \|\bar{\Theta}_k - \hat{\Xi}_k\|_F^2 - \|\hat{\Theta}_k - \hat{\Xi}_k\|_F^2 && \\
    &\indenti{\Delta V_k =} - \beta(2 - \beta)\|\bar{\Theta}_k - \hat{\Xi}_k\|_F^2 && \\
    &\indenti{\Delta V_k =} + \frac{2\gamma(1 - \beta)}{\N_k}\Tr[(\bar{\Theta}_k - \hat{\Xi}_k)^\top\nabla L_k(\hat{\Theta}_{k+1})] &&
\end{flalign*}
\begin{flalign*}
    &\indenti{\Delta V_k} = \frac{2\gamma^2\|\phi_k\|^2}{\N_k^2}\|\tilde{\Theta}_{k+1}\phi_k\|^2 - \frac{2\gamma}{\N_k}\|\tilde{\Theta}_{k+1}\phi_k\|^2 && \\
    &\indenti{\Delta V_k =} + \|\bar{\Theta}_k - \hat{\Xi}_k\|_F^2 - \|\hat{\Theta}_k - \hat{\Xi}_k\|_F^2 && \\
    &\indenti{\Delta V_k =} - \beta(2 - \beta)\|\bar{\Theta}_k - \hat{\Xi}_k\|_F^2 && \\
    &\indenti{\Delta V_k =} + \frac{2\gamma(1 - \beta)}{\N_k}\Tr[(\bar{\Theta}_k - \hat{\Xi}_k)^\top\nabla L_k(\hat{\Theta}_{k+1})] && \\
    &\indenti{\Delta V_k =} + \frac{2\gamma}{\N_k}\Tr[(\hat{\Theta}_{k+1} - \hat{\Xi}_k)^\top\nabla L_k(\hat{\Theta}_{k+1})] &&
\end{flalign*}
\begin{flalign*}
    &\indenti{\Delta V_k} = -\frac{2\gamma}{\N_k}\Big(1 - \frac{\gamma\|\phi_k\|^2}{\N_k}\Big)\|\tilde{\Theta}_{k+1}\phi_k\|^2 && \\
    &\indenti{\Delta V_k =} + \|\hat{\Theta}_k - \frac{\gamma\beta}{\N_k}\nabla L_k(\hat{\Theta}_k) - \hat{\Xi}_k\|_F^2 - \|\hat{\Theta}_k - \hat{\Xi}_k\|_F^2 && \\
    &\indenti{\Delta V_k =} - \beta(2 - \beta)\|\bar{\Theta}_k - \hat{\Xi}_k\|_F^2 && \\
    &\indenti{\Delta V_k =} + \frac{4\gamma(1 - \beta)}{\N_k}\Tr[(\bar{\Theta}_k - \hat{\Xi}_k)^\top\nabla L_k(\hat{\Theta}_{k+1})] &&
\end{flalign*}
\begin{flalign*}
    &\indenti{\Delta V_k} = -\frac{2\gamma}{\N_k}\Big(1 - \frac{\gamma\|\phi_k\|^2}{\N_k}\Big)\|\tilde{\Theta}_{k+1}\phi_k\|^2 && \\
    &\indenti{\Delta V_k =} + \frac{\gamma^2\beta^2}{\N_k^2}\|\nabla L_k(\hat{\Theta}_k)\|_F^2 && \\
    &\indenti{\Delta V_k =} - \frac{2\gamma\beta}{\N_k}\Tr[(\hat{\Theta}_k - \hat{\Xi}_k)^\top\nabla L_k(\hat{\Theta}_k)] && \\
    &\indenti{\Delta V_k =} - \beta(2 - \beta)\|\bar{\Theta}_k - \hat{\Xi}_k\|_F^2 && \\
    &\indenti{\Delta V_k =} + \frac{4\gamma(1 - \beta)}{\N_k}\Tr[(\bar{\Theta}_k - \hat{\Xi}_k)^\top\nabla L_k(\hat{\Theta}_{k+1})] &&
\end{flalign*}
\begin{flalign*}
    &\indenti{\Delta V_k} = -\frac{2\gamma}{\N_k}\Big(1 - \frac{\gamma\|\phi_k\|^2}{\N_k}\Big)\|\tilde{\Theta}_{k+1}\phi_k\|^2 && \\
    &\indenti{\Delta V_k =} - \frac{\gamma^2\beta^2}{\N_k^2}\|\nabla L_k(\hat{\Theta}_k)\|_F^2 && \\
    &\indenti{\Delta V_k =} - \frac{2\gamma\beta}{\N_k}\Tr[(\bar{\Theta}_k - \hat{\Xi}_k)^\top\nabla L_k(\hat{\Theta}_k)] && \\
    &\indenti{\Delta V_k =} - \beta(2 - \beta)\|\bar{\Theta}_k - \hat{\Xi}_k\|_F^2 && \\
    &\indenti{\Delta V_k =} + \frac{4\gamma(1 - \beta)}{\N_k}\Tr[(\bar{\Theta}_k - \hat{\Xi}_k)^\top\nabla L_k(\hat{\Theta}_{k+1})] &&
\end{flalign*}
\begin{flalign*}
    &\indenti{\Delta V_k} = -\frac{2\gamma}{\N_k}(1 - \frac{\gamma\|\phi_k\|^2}{\N_k})\|\tilde{\Theta}_{k+1}\phi_k\|^2 && \\
    &\indenti{\Delta V_k =} - \frac{\gamma^2\beta^2}{\N_k^2}\|\nabla L_k(\hat{\Theta}_k)\|_F^2 && \\
    &\indenti{\Delta V_k =} - \frac{2\gamma\beta}{\N_k}\phi_k^\top(\bar{\Theta}_k - \hat{\Xi}_k)^\top(\bar{\Theta}_k - \beta(\bar{\Theta}_k - \hat{\Xi}_k) - \Theta_* && \\
    &\indenti{\Delta V_k = \hspace{20pt}} + \hat{\Theta}_k - \bar{\Theta}_k + \beta(\bar{\Theta}_k - \hat{\Xi}_k))\phi_k && \\
    &\indenti{\Delta V_k =} - \beta(2 - \beta)\|\bar{\Theta}_k - \hat{\Xi}_k\|_F^2 && \\
    &\indenti{\Delta V_k =} + \frac{4\gamma(1 - \beta)}{\N_k}\phi_k^\top(\bar{\Theta}_k - \hat{\Xi}_k)^\top\tilde{\Theta}_{k+1}\phi_k &&
\end{flalign*}
\begin{flalign*}
    &\indenti{\Delta V_k} = -\frac{2\gamma}{\N_k}(1 - \frac{\gamma\|\phi_k\|^2}{\N_k})\|\tilde{\Theta}_{k+1}\phi_k\|^2 && \\
    &\indenti{\Delta V_k =} - \frac{\gamma^2\beta^2}{\N_k^2}\|\nabla L_k(\hat{\Theta}_k)\|_F^2 && \\
    &\indenti{\Delta V_k =} - \frac{2\gamma^2\beta^2}{\N_k^2}\phi_k^\top(\bar{\Theta}_k - \hat{\Xi}_k)^\top\nabla L_k(\hat{\Theta}_k)\phi_k && \\
    &\indenti{\Delta V_k =} - \frac{2\gamma\beta^2}{\N_k}\|(\bar{\Theta}_k - \hat{\Xi}_k)\phi_k\|^2 && \\
    &\indenti{\Delta V_k =} - \beta(2 - \beta)\|\bar{\Theta}_k - \hat{\Xi}_k\|_F^2 && \\
    &\indenti{\Delta V_k =} + \frac{2\gamma(2 - 3\beta)}{\N_k}\phi_k^\top(\bar{\Theta}_k - \hat{\Xi}_k)^\top\tilde{\Theta}_{k+1}\phi_k &&
\end{flalign*}
\begin{flalign*}
    &\indenti{\Delta V_k} \leq -\frac{2\gamma}{\N_k}(1 - \frac{\gamma\|\phi_k\|^2}{\N_k})\|\tilde{\Theta}_{k+1}\phi_k\|^2 && \\
    &\indenti{\Delta V_k =} - \frac{\gamma^2\beta^2}{\N_k^2}\|\nabla L_k(\hat{\Theta}_k)\|_F^2 && \\
    &\indenti{\Delta V_k =} - \frac{2\gamma^2\beta^2}{\N_k^2}\phi_k^\top(\bar{\Theta}_k - \hat{\Xi}_k)^\top\nabla L_k(\hat{\Theta}_k)\phi_k && \\
    &\indenti{\Delta V_k =} - \frac{2\gamma\beta^2}{\N_k}\|(\bar{\Theta}_k - \hat{\Xi}_k)\phi_k\|^2 && \\
    &\indenti{\Delta V_k =} - \frac{\beta(2 - \beta)\|\phi_k\|^2}{\N_k}\|\bar{\Theta}_k - \hat{\Xi}_k\|_F^2 && \\
    &\indenti{\Delta V_k =} + \frac{2\gamma(2 - 3\beta)}{\N_k}\phi_k^\top(\bar{\Theta}_k - \hat{\Xi}_k)^\top\tilde{\Theta}_{k+1}\phi_k &&
\end{flalign*}
\begin{flalign*}
    &\indenti{\Delta V_k} \leq -\frac{2\gamma}{\N_k}(1 - \frac{\gamma\|\phi_k\|^2}{\N_k})\|\tilde{\Theta}_{k+1}\phi_k\|^2 && \\
    &\indenti{\Delta V_k =} - \frac{\gamma^2\beta^2}{\N_k^2}\|\nabla L_k(\hat{\Theta}_k)\|_F^2 && \\
    &\indenti{\Delta V_k =} + \frac{2\gamma^2\beta^2\|\phi_k\|^2}{\N_k^2}\|\bar{\Theta}_k - \hat{\Xi}_k\|_F\|\nabla L_k(\hat{\Theta}_k)\|_F && \\
    &\indenti{\Delta V_k =} - \frac{2\gamma\beta^2}{\N_k}\|(\bar{\Theta}_k - \hat{\Xi}_k)\phi_k\|^2 && \\
    &\indenti{\Delta V_k =} - \frac{\beta(2 - \beta)\|\phi_k\|^2}{\N_k}\|\bar{\Theta}_k - \hat{\Xi}_k\|_F^2 && \\
    &\indenti{\Delta V_k =} + \frac{2\gamma|2 - 3\beta|\|\phi_k\|}{\N_k}\|(\bar{\Theta}_k - \hat{\Xi}_k)\|_F\|\tilde{\Theta}_{k+1}\phi_k\| &&
\end{flalign*}
\begin{flalign*}
    &\indenti{\Delta V_k} \leq -\frac{2\gamma}{\N_k}(1 - \frac{\gamma\|\phi_k\|^2}{\N_k})\|\tilde{\Theta}_{k+1}\phi_k\|^2 && \\
    &\indenti{\Delta V_k =} - \frac{\gamma^2\beta^2}{\N_k^2}\|\nabla L_k(\hat{\Theta}_k)\|_F^2 && \\
    &\indenti{\Delta V_k =} + \frac{2\gamma^2\beta^2\|\phi_k\|}{\N_k^{3/2}}\|\bar{\Theta}_k - \hat{\Xi}_k\|_F\|\nabla L_k(\hat{\Theta}_k)\|_F && \\
    &\indenti{\Delta V_k =} - \frac{2\gamma\beta^2}{\N_k}\|(\bar{\Theta}_k - \hat{\Xi}_k)\phi_k\|^2 && \\
    &\indenti{\Delta V_k =} - \frac{\beta(2 - \beta)\|\phi_k\|^2}{\N_k}\|\bar{\Theta}_k - \hat{\Xi}_k\|_F^2 && \\
    &\indenti{\Delta V_k =} + \frac{2\gamma|2 - 3\beta|\|\phi_k\|}{\N_k}\|\bar{\Theta}_k - \hat{\Xi}_k\|_F\|\tilde{\Theta}_{k+1}\phi_k\|. &&
\end{flalign*}
where the above inequalities use the fact that $\N_k \geq \|\phi_k\|^2$ and require $0 < \beta < 2$ and $\gamma > 0$. We now complete squares with the $\|\nabla L_k(\hat{\Theta}_k)\|_F$ terms and the $\|\phi_k\|^2\|\bar{\Theta}_k - \hat{\Xi}_k\|_F^2$ term to obtain
\begin{flalign*}
    &\Delta V_k \leq -\frac{2\gamma}{\N_k}(1 - \frac{\gamma\|\phi_k\|^2}{\N_k})\|\tilde{\Theta}_{k+1}\phi_k\|^2 && \\
    &\indenti{\Delta V_k =} - \frac{\gamma^2\beta^2}{\N_k}\Big(\frac{1}{\sqrt{\N_k}}\|\nabla L_k(\hat{\Theta}_k)\|_F - \|\phi_k\|\|\bar{\Theta}_k - \hat{\Xi}_k\|_F)^2 && \\
    &\indenti{\Delta V_k =} - \frac{2\gamma\beta^2}{\N_k}\|(\bar{\Theta}_k - \hat{\Xi}_k)\phi_k\|^2 && \\
    &\indenti{\Delta V_k =} - \frac{\beta(2 - (1 + \gamma^2)\beta)\|\phi_k\|^2}{\N_k}\|\bar{\Theta}_k - \hat{\Xi}_k\|_F^2 && \\
    &\indenti{\Delta V_k =} + \frac{2\gamma|2 - 3\beta|\|\phi_k\|}{\N_k}\|\bar{\Theta}_k - \hat{\Xi}_k\|_F\|\tilde{\Theta}_{k+1}\phi_k\| &&
\end{flalign*}

We then complete squares with the $\tilde{\Theta}_{k+1}^\top\phi_k$ terms and the $\|\phi_k\|^2\|\bar{\Theta}_k - \hat{\Xi}_k\|^2$ term and note that $1 - \gamma \leq 1 - \frac{\gamma\|\phi_k\|^2}{\N_k} \leq 1$ to obtain
\begin{flalign*}
    &\Delta V_k \leq -\frac{\gamma}{\N_k}\Big(2(1 - \gamma) - \frac{\gamma(2 - 3\beta)^2}{\beta(2 - (1 + \gamma^2)\beta)}\Big)\|\tilde{\Theta}_{k+1}\phi_k\|^2 && \\
    &\indenti{\Delta V_k =} - \frac{\gamma^2\beta^2}{\N_k}\Big(\frac{1}{\sqrt{\N_k}}\|\nabla L_k(\hat{\Theta}_k)\|_F - \|\phi_k\|\|\bar{\Theta}_k - \hat{\Xi}_k\|_F\Big)^2 && \\
    &\indenti{\Delta V_k =} - \frac{2\gamma\beta^2}{\N_k}\|(\bar{\Theta}_k - \hat{\Xi}_k)\phi_k\|^2 && \\
    &\indenti{\Delta V_k =} - \frac{\beta(2 - (1 + \gamma^2)\beta)}{\N_k}\Big(\frac{\gamma|2 - 3\beta|}{\beta(2 - (1 + \gamma^2)\beta)}\|\tilde{\Theta}_{k+1}\phi_k\| && \\
    &\indenti{\Delta V_k = - \frac{\beta(2 - (1 + \gamma^2)\beta)}{\N_k}..} - \|\phi_k\|\|\bar{\Theta}_k - \hat{\Xi}_k\|_F\Big)^2 && \\
    &\indenti{\Delta V_k} \leq 0 &&
\end{flalign*}
if $0 < \beta < 2$, $\gamma > 0$, $2 - (1 + \gamma^2)\beta > 0$, and $2(1 - \gamma) - \frac{\gamma(2 - 3\beta)^2}{\beta(2 - (1 + \gamma^2)\beta)} > 0$. 

This is sufficient to prove that $V_k$ is a Lyapunov function. However, in order to prove Theorem \ref{thm:ht_error_bounded}, we need the increment in terms of $\varepsilon_{k+1}$. Rearranging \eqref{eqn:ht_Theta_bar}-\eqref{eqn:ht_Theta} and using \eqref{eqn:error_model_1}, we obtain
\begin{flalign}
    &\tilde{\Theta}_{k+1}\phi_k = (\bar{\Theta}_k - \beta(\bar{\Theta}_k - \hat{\Xi}_k) - \Theta_*)\phi_k && \nonumber \\
    &\indenti{\tilde{\Theta}_{k+1}\phi_k} = \Big(1 - \frac{\gamma\beta\|\phi_k\|^2}{\N_k}\Big)\tilde{\Theta}_k\phi_k - \beta(\bar{\Theta}_k - \hat{\Xi}_k)\phi_k && \nonumber \\
    &\indenti{\tilde{\Theta}_{k+1}\phi_k} = \Big(1 - \frac{\gamma\beta\|\phi_k\|^2}{\N_k}\Big)\varepsilon_{k+1} - \beta(\bar{\Theta}_k - \hat{\Xi}_k)\phi_k, \label{eqn:ht_a_posteriori_prediction_error} &&
\end{flalign}
\begin{flalign}
    &\|\tilde{\Theta}_{k+1}\phi_k\|^2 = \Big(1 - \frac{\gamma\beta\|\phi_k\|^2}{\N_k}\Big)^2\|\varepsilon_{k+1}\|^2 && \nonumber \\
    &\indenti{\|\tilde{\Theta}_{k+1}\phi_k\|^2 =} - 2\beta\Big(1 - \frac{\gamma\beta\|\phi_k\|^2}{\N_k}\Big)\phi_k^\top(\bar{\Theta}_k - \hat{\Xi}_k)^\top\varepsilon_{k+1} && \nonumber \\
    &\indenti{\|\tilde{\Theta}_{k+1}\phi_k\|^2 =} + \beta^2\|(\bar{\Theta}_k - \hat{\Xi}_k)\phi_k\|^2 \label{eqn:ht_a_posteriori_error_sqr} &&
\end{flalign}
Finally, defining $\alpha$ as
\begin{equation} \label{eqn:ht_alpha}
    \alpha = 2(1 - \gamma) - \frac{\gamma(2 - 3\beta)^2}{\beta(2 - (1 + \gamma^2)\beta)} > 0,
\end{equation}
we substitute the expression above into the Lyapunov increment and rearrange to yield
\begin{flalign*}
    &\Delta V_k \leq -\frac{\gamma\alpha}{\N_k}\|\tilde{\Theta}_{k+1}\phi_k\|^2 && \\
    &\indenti{\Delta V_k =} - \frac{2\gamma\beta^2}{\N_k}\|(\bar{\Theta}_k - \hat{\Xi}_k)\phi_k\|^2 &&
\end{flalign*}
\begin{flalign*}
    &\indenti{\Delta V_k} = -\frac{\gamma\alpha}{\N_k}\Big(1 - \frac{\gamma\beta\|\phi_k\|^2}{\N_k}\Big)^2\|\varepsilon_{k+1}\|^2 && \\
    &\indenti{\Delta V_k =} + \frac{2\gamma\beta\alpha}{\N_k}\Big(1 - \frac{\gamma\beta\|\phi_k\|^2}{\N_k}\Big)\phi_k^\top(\bar{\Theta}_k - \hat{\Xi}_k)^\top\varepsilon_{k+1} && \\
    &\indenti{\Delta V_k =} - \frac{\gamma\beta^2(2 + \alpha)}{\N_k}\|(\bar{\Theta}_k - \hat{\Xi}_k)\phi_k\|^2 &&
\end{flalign*}
Completing the square with the $\varepsilon_{k+1}$ and $(\bar{\Theta}_k - \hat{\Xi}_k)^\top\phi_k$ terms and noting that $1 - \gamma\beta < 1 - \frac{\gamma\beta\|\phi_k\|^2}{\N_k} \leq 1$, we obtain the final expression:
\begin{flalign*}
    &\Delta V_k \leq -\frac{\gamma\alpha(1 - \gamma\beta)^2}{\N_k}\Big(1 - \frac{\alpha}{2 + \alpha}\Big)\|\varepsilon_{k+1}\|^2 && \\
    &\indenti{\Delta V_k =} - \frac{\gamma(2 + \alpha)}{\N_k}\Big(\frac{\alpha}{2 + \alpha}\Big(1 - \frac{\gamma\beta\|\phi_k\|^2}{\N_k}\Big)\varepsilon_{k+1} && \\
    &\indenti{\Delta V_k = - \frac{\gamma(2 + \alpha)}{\N_k}..} - \beta(\bar{\Theta}_k - \hat{\Xi}_k)^\top\phi_k\Big)^2 && \\
    &\indenti{\Delta V_k} \leq 0 &&
\end{flalign*}
since $\alpha > 0 \implies 1 - \frac{\alpha}{2 + \alpha} > 0$, and the restrictions on $\gamma$ and $\beta$ imply that $\gamma\beta < 1$.

In summary, we have
\begin{equation} \label{eqn:ht_Lyapunov_increment}
    \Delta V_k \leq -\gamma\alpha(1 - \gamma\beta)^2\Big(1 - \frac{\alpha}{2 + \alpha}\Big)\frac{\|\varepsilon_{k+1}\|^2}{\N_k} \leq 0.
\end{equation}
\subsection{Proof of Theorem \ref{thm:ht_error_bounded}} \label{subsec:ht_stability_proof}


    Consider the Lyapunov function in \eqref{eqn:ht_Lyapunov_function}. From Proposition \ref{pro:ht_Lyapunov}, we know that $V_k \geq 0$ and $\Delta V_k \leq 0$. It follows immediately that
    \begin{gather}
        0 \leq \lim_{k \tends \infty} V_k \leq V_0 \implies \nonumber \\
        0 \leq V_0 + \sum_{k = 0}^\infty \Delta V_k \leq V_0 \implies \nonumber \\
        -V_0 \leq \sum_{k = 0}^\infty \Delta V_k \leq 0 \implies \nonumber \\
        \lim_{k \tends \infty} \Delta V_k = 0.
    \end{gather}
    Substituting \eqref{eqn:ht_Lyapunov_increment}, we get
    \begin{equation} \label{eqn:ht_lim_goes_to_0}
        \lim_{k \tends \infty} \frac{\|\varepsilon_{k+1}\|^2}{\N_k} = 0.
    \end{equation}
    
    The remainder of the proof proceeds identically to that of Theorem \ref{thm:gd_error_bounded}.
    Equation \eqref{eqn:ht_lim_goes_to_0} implies that either (1) $\lim_{k \to \infty} \|\varepsilon_k\| = 0$ or (2) $\lim_{k \to \infty} \frac{\|\varepsilon_{k+1}\|}{\|x_{pk}\|} = 0$. If case (1) holds, then the proof is complete. Under case (2), the proof is completed using the same Lyapunov analysis as found in Subsection \ref{subsec:gd_stability_proof}.
\subsection{A LARGER REGION OF ALLOWABLE HIGH-ORDER TUNER GAINS} \label{subsec:ht_hyperparameters}

In this section, we rewrite the increment of the Lyapunov function in \eqref{eqn:ht_Lyapunov_function} in a different manner than in Section \ref{subsec:ht_Lyapunov_proof} in order to graphically show a larger range of allowable hyperparameters $\gamma$ and $\beta$. From Section \ref{subsec:ht_Lyapunov_proof}, we have
\begin{flalign*}
    &\Delta V_k = -\frac{2\gamma}{\N_k}\Big(1 - \frac{\gamma\|\phi_k\|^2}{\N_k}\Big)\|\tilde{\Theta}_{k+1}\phi_k\|^2 && \\
    &\indenti{\Delta V_k =} - \frac{\gamma^2\beta^2}{\N_k^2}\|\nabla L_k(\hat{\Theta}_k)\|_F^2 && \\
    &\indenti{\Delta V_k =} - \frac{2\gamma\beta}{\N_k}\Tr[(\bar{\Theta}_k - \hat{\Xi}_k)^\top\nabla L_k(\hat{\Theta}_k)] && \\
    &\indenti{\Delta V_k =} - \beta(2 - \beta)\|\bar{\Theta}_k - \hat{\Xi}_k\|_F^2 && \\
    &\indenti{\Delta V_k =} + \frac{4\gamma(1 - \beta)}{\N_k}\Tr[(\bar{\Theta}_k - \hat{\Xi}_k)^\top\nabla L_k(\hat{\Theta}_{k+1})] &&
\end{flalign*}
\begin{flalign*}
    &\indenti{\Delta V_k} = -\frac{2\gamma}{\N_k}\Big(1 - \frac{\gamma\|\phi_k\|^2}{\N_k}\Big)\|\tilde{\Theta}_{k+1}\phi_k\|^2 && \\
    &\indenti{\Delta V_k =} - \frac{\gamma^2\beta^2\|\phi_k\|^2}{\N_k^2}\|\varepsilon_{k+1}\|^2 && \\
    &\indenti{\Delta V_k =} - \frac{2\gamma\beta}{\N_k}\phi_k^\top(\bar{\Theta}_k - \hat{\Xi}_k)^\top\varepsilon_{k+1} && \\
    &\indenti{\Delta V_k =} - \beta(2 - \beta)\|\bar{\Theta}_k - \hat{\Xi}_k\|_F^2 && \\
    &\indenti{\Delta V_k =} + \frac{4\gamma(1 - \beta)}{\N_k}\phi_k^\top(\bar{\Theta}_k - \hat{\Xi}_k)^\top\tilde{\Theta}_{k+1}\phi_k &&
\end{flalign*}
Define
\begin{equation} \label{eqn:ht_lambda}
    \lambda_k = \frac{\|\phi_k\|^2}{\N_k} \in [0, 1].
\end{equation}
Now, we apply \eqref{eqn:ht_a_posteriori_prediction_error}, \eqref{eqn:ht_a_posteriori_error_sqr}, and \eqref{eqn:ht_lambda} to obtain
\begin{flalign*}
    &\Delta V_k = -\frac{2\gamma}{\N_k}(1 - \gamma\lambda_k)(1 - \gamma\beta\lambda_k)^2\|\varepsilon_{k+1}\|^2 && \\
    &\indenti{\Delta V_k =} + \frac{4\gamma\beta}{\N_k}(1 - \gamma\lambda_k)(1 - \gamma\beta\lambda_k)\phi_k^\top(\bar{\Theta}_k - \hat{\Xi}_k)^\top\varepsilon_{k+1} && \\
    &\indenti{\Delta V_k =} - \frac{2\gamma\beta^2}{\N_k}(1 - \gamma\lambda_k)\|(\bar{\Theta}_k - \hat{\Xi}_k)\phi_k\|^2 && \\
    &\indenti{\Delta V_k =} - \frac{\gamma^2\beta^2\lambda_k}{\N_k}\|\varepsilon_{k+1}\|^2 && \\
    &\indenti{\Delta V_k =} - \frac{2\gamma\beta}{\N_k}\phi_k^\top(\bar{\Theta}_k - \hat{\Xi}_k)^\top\varepsilon_{k+1} && \\
    &\indenti{\Delta V_k =} - \beta(2 - \beta)\|\bar{\Theta}_k - \hat{\Xi}_k\|_F^2 && \\
    &\indenti{\Delta V_k =} + \frac{4\gamma(1 - \beta)}{\N_k}(1 - \gamma\beta\lambda_k)\phi_k^\top(\bar{\Theta}_k - \hat{\Xi}_k)^\top\varepsilon_{k+1} && \\
    &\indenti{\Delta V_k =} - \frac{4\gamma\beta(1 - \beta)}{\N_k}\|(\bar{\Theta}_k - \hat{\Xi}_k)\phi_k\|^2 &&
\end{flalign*}
\begin{flalign*}
    &\indenti{\Delta V_k} \leq -\frac{2\gamma}{\N_k}(1 - \gamma\lambda_k)(1 - \gamma\beta\lambda_k)^2\|\varepsilon_{k+1}\|^2 && \\
    &\indenti{\Delta V_k =} + \frac{4\gamma\beta}{\N_k}(1 - \gamma\lambda_k)(1 - \gamma\beta\lambda_k)\phi_k^\top(\bar{\Theta}_k - \hat{\Xi}_k)^\top\varepsilon_{k+1} && \\
    &\indenti{\Delta V_k =} - \frac{2\gamma\beta^2}{\N_k}(1 - \gamma\lambda_k)\|(\bar{\Theta}_k - \hat{\Xi}_k)\phi_k\|^2 && \\
    &\indenti{\Delta V_k =} - \frac{\gamma^2\beta^2\lambda_k}{\N_k}\|\varepsilon_{k+1}\|^2 && \\
    &\indenti{\Delta V_k =} - \frac{2\gamma\beta}{\N_k}\phi_k^\top(\bar{\Theta}_k - \hat{\Xi}_k)^\top\varepsilon_{k+1} && \\
    &\indenti{\Delta V_k =} - \frac{\beta(2 - \beta)}{\N_k}\|(\bar{\Theta}_k - \hat{\Xi}_k)\phi_k\|^2 && \\
    &\indenti{\Delta V_k =} + \frac{4\gamma(1 - \beta)}{\N_k}(1 - \gamma\beta\lambda_k)\phi_k^\top(\bar{\Theta}_k - \hat{\Xi}_k)^\top\varepsilon_{k+1} && \\
    &\indenti{\Delta V_k =} - \frac{4\gamma\beta(1 - \beta)}{\N_k}\|(\bar{\Theta}_k - \hat{\Xi}_k)\phi_k\|^2 &&
\end{flalign*}
\begin{flalign*}
    &\indenti{\Delta V_k} = -\frac{\gamma}{\N_k}(2(1 - \gamma\lambda_k)(1 - \gamma\beta\lambda_k)^2 + \gamma\beta^2\lambda_k)\|\varepsilon_{k+1}\|^2 && \\ 
    &\indenti{\Delta V_k =} + \frac{2\gamma}{\N_k}(2\beta(1 - \gamma\lambda_k)(1 - \gamma\beta\lambda_k) - \beta && \\
    &\indenti{\Delta V_k = + \frac{2\gamma}{\N_k}.} + 2(1 - \beta)(1 - \gamma\beta\lambda_k))\phi_k^\top(\bar{\Theta}_k - \hat{\Xi}_k)^\top\varepsilon_{k+1} && \\
    &\indenti{\Delta V_k =} - \frac{\beta}{\N_k}(2\gamma\beta(1 - \gamma\lambda_k) + 2 - \beta && \\
    &\indenti{\Delta V_k = - \frac{\beta}{\N_k}.} + 4\gamma(1 - \beta))\|(\bar{\Theta}_k - \hat{\Xi}_k)\phi_k\|^2 &&
\end{flalign*}
\begin{flalign*}
    &\indenti{\Delta V_k} = -\frac{a(\gamma, \beta, \lambda_k)}{\N_k}\|\varepsilon_{k+1}\|^2 && \\ 
    &\indenti{\Delta V_k =} + \frac{2b(\gamma, \beta, \lambda_k)}{\N_k}\phi_k^\top(\bar{\Theta}_k - \hat{\Xi}_k)^\top\varepsilon_{k+1} && \\
    &\indenti{\Delta V_k =} - \frac{c(\gamma, \beta, \lambda_k)}{\N_k}\|(\bar{\Theta}_k - \hat{\Xi}_k)\phi_k\|^2 &&
\end{flalign*}
\begin{flalign*}
    &\indenti{\Delta V_k} = -\Big(a(\gamma, \beta, \lambda_k) - \frac{b^2(\gamma, \beta, \lambda_k)}{c(\gamma, \beta, \lambda_k)}\Big)\frac{\|\varepsilon_{k+1}\|^2}{\N_k} && \\ 
    &\indenti{\Delta V_k =} - \frac{c(\gamma, \beta, \lambda_k)}{\N_k}\Big\|(\bar{\Theta}_k - \hat{\Xi}_k)\phi_k - \frac{b(\gamma, \beta, \lambda_k)}{c(\gamma, \beta, \lambda_k)}\varepsilon_{k+1}\Big\|^2 &&
\end{flalign*}
Define
\begin{gather}
    d(\gamma, \beta, \lambda_k) = a(\gamma, \beta, \lambda_k) - \frac{b^2(\gamma, \beta, \lambda_k)}{c(\gamma, \beta, \lambda_k)}, \label{eqn:ht_d} \\
    c(\gamma, \beta) = \min_{\lambda_k \in [0, 1]} c(\gamma, \beta, \lambda_k), \label{eqn:ht_c_min} \\
    d(\gamma, \beta) = \min_{\lambda_k \in [0, 1]} d(\gamma, \beta, \lambda_k). \label{eqn:ht_d_min}
\end{gather}
Then,
\begin{equation} \label{eqn:ht_Lyapunov_increment_2}
    \Delta V_k \leq -d(\gamma, \beta)\frac{\|\varepsilon_{k+1}\|^2}{\N_k} \leq 0
\end{equation}
if $c(\gamma, \beta) > 0$ and $d(\gamma, \beta) > 0$.



Analytical expressions for values of $\gamma$ and $\beta$ under which $c(\gamma, \beta) > 0$ and $d(\gamma, \beta) > 0$ are difficult to obtain, and any such expression would be both cumbersome and not particularly illuminating.
We thus opt for a graphical approach in which we discretize $\gamma \in [0, 4]$, $\beta \in [0, 2]$, and $\lambda_k \in [0, 1]$, and at each value of $\gamma$ and $\beta$, calculate the minima of $c(\gamma, \beta, \lambda_k)$ and $d(\gamma, \beta, \lambda_k)$ over all $\lambda_k$. Finally, we alter the graphical approximation of $d(\gamma, \beta)$ to be negative at all points where $c(\gamma, \beta) \leq 0$. The resulting graphical approximation of the region of allowable hyperparameters is shown in Figure \ref{fig:ht_allowable_hyperparameters}. Note that the color map displays the value of $d(\gamma, \beta)$ whenever it is positive, but that the value is changed to $-\max_{\gamma, \beta} d(\gamma, \beta)$ wherever $d(\gamma, \beta) \leq 0$ in order to clearly show the border between the allowable and non-allowable regions. Allowable hyperparameters are thus any coordinate pair with coloring other than indigo.

Figure \ref{fig:ht_allowable_hyperparameters} clearly shows a much larger range of allowable hyperparameters than the range given in Proposition \ref{pro:ht_Lyapunov}. It was necessary to draw from this extended range of allowable hyperparameters in order to obtain the simulation results in Section \ref{sec:simulations}.

\begin{figure}
    \centering
    \includegraphics[width=0.5\textwidth]{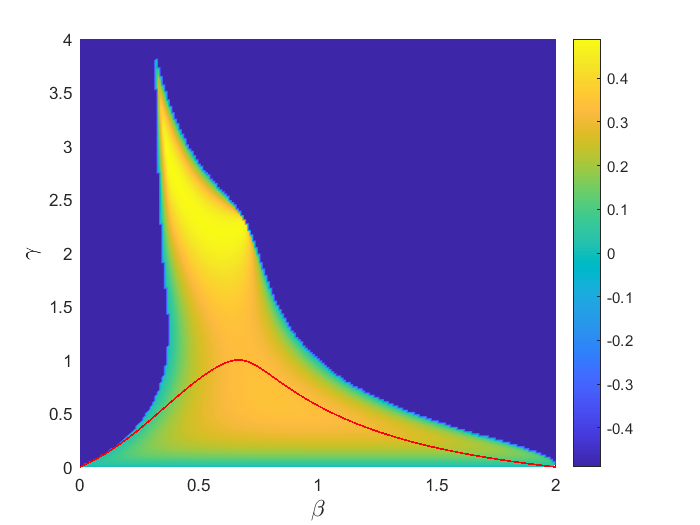}
    \caption{(to be viewed in color) The color map displays $d(\gamma, \beta)$ at all coordinates $(\gamma, \beta)$ wherever $c(\gamma, \beta) > 0$ and $d(\gamma, \beta) > 0$, and a constant negative value elsewhere. The region of allowable hyperparameters is thus the region of colors other than indigo. For each $\beta$, the red line indicates the maximum value of $\gamma$ allowed under Proposition \ref{pro:ht_Lyapunov}.}
    \label{fig:ht_allowable_hyperparameters}
\end{figure}

\end{document}